\newcommand{\bydef}{:=}
\newcommand{\Skew}{\mathrm{Skew}}
\newcommand{\id}{\mathrm{id}}
\newcommand{\trace}{\mathrm{tr}}
\newcommand{\sr}{\mathrm{sr}}
\DeclareMathOperator{\charac}{\mathrm{char}}
\newcommand{\norm}{\mathrm{n}}
\newcommand{\cA}{\mathcal{A}}
\newcommand{\cB}{\mathcal{B}} 
\newcommand{\cC}{\mathcal{C}}
\newcommand{\cH}{\mathcal{H}}
\newcommand{\cO}{\mathcal{O}}
\newcommand{\cS}{\mathcal{S}}
\newcommand{\ZZ}{\mathbb{Z}}
\newcommand{\CC}{\mathbb{C}}
\newcommand{\FF}{\mathbb{F}} 
\newcommand{\KK}{\mathbb{K}}
\DeclareMathOperator{\Aut}{\mathrm{Aut}}
\DeclareMathOperator{\Stab}{\mathrm{Stab}}
\DeclareMathOperator{\supp}{\mathrm{Supp}}
\DeclareMathOperator{\Mat}{\mathrm{Mat}}
\newcommand{\frsl}{{\mathfrak{sl}}}
\newcommand{\GL}{\mathrm{GL}} 
\newcommand{\SL}{\mathrm{SL}}
\newcommand{\PGL}{\mathrm{PGL}}
\newcommand{\PSL}{\mathrm{PSL}}
\newcommand{\U}{\mathrm{U}}
\newcommand{\PU}{\mathrm{PU}}
\newcommand{\PSU}{\mathrm{PSU}}
\providecommand{\espan}[1]{\text{span}\left\{ #1\right\}}
\newenvironment{romanenumerate} 
{\begin{enumerate}

}{\end{enumerate}}
\newtheorem{theorem}{Theorem}[section]
\newtheorem{lemma}[theorem]{Lemma}
\newtheorem{corollary}[theorem]{Corollary}
\theoremstyle{definition} 
\newtheorem{definition}[theorem]{Definition}
\theoremstyle{remark} \newtheorem{remark}[theorem]{Remark}
\numberwithin{equation}{section}
\def\hregleta{\hrule height .5pt}
\def\hreglon{\hrule height 1pt}
\def\vreglon{\vrule height 12pt width1pt depth 4pt}
\def\vregleta{\vrule width .5pt}
\def\hreglonfill{\leaders\hreglon\hfill}
\def\hregletafill{\leaders\hregleta\hfill}
\title{Okubo algebras with isotropic norm}
\author{Alberto Elduque} 
\address{Departamento de
Matem\'{a}ticas e Instituto Universitario de Matem\'aticas y
Aplicaciones, Universidad de Zaragoza, 50009 Zaragoza, Spain}
\email{elduque@unizar.es} 
\thanks{Supported by grant
MTM2017-83506-C2-1-P (AEI/FEDER, UE) and by grant E22\_17R (Gobierno de 
Arag\'on, Grupo de investigaci\'on ``\'Algebra
y Geometr{\'\i}a'').}
\begin{document}

\begin{abstract}
Okubo algebras form an important class of nonunital 
composition algebras of dimension $8$. Contrary to what happens for 
unital composition algebras, 
they are not determined by their multiplicative norms. Okubo algebras 
with isotropic norm are characterized here by the existence of a 
\emph{special} grading. In the split case, and under some restrictions 
on the ground field,
the automorphism group of the most symmetric of these gradings is the 
projective unitary group $\PU(3,2^2)$, whose structure is showcased by 
the grading.
\end{abstract}

\maketitle




\bigskip


Unital composition algebras (also termed Hurwitz algebras) over a field are the 
analogues of the classical algebras or real and complex numbers, 
quaternions, and octonions. In particular, their dimension is restricted to $1$, $2$, $4$ or $8$. The reader may consult \cite[Chapter 2]{ZSSS}, \cite[Chapter VIII]{KMRT}, or \cite{SV00}.

In recent years, a new class of composition 
algebras, the so called \emph{symmetric composition algebras}, better 
suited for the study of the triality phenomenon, have made their 
appearance. These split in two disjoint families: para-Hurwitz algebras and Okubo algebras. (See e.g. \cite[Chapter VIII]{KMRT} or \cite{Eld_comp}.)

Two Hurwitz (or para-Hurwitz) algebras over a field are isomorphic if and only if their norms are isometric. Since the norm of any composition algebra is a Pfister form, this implies in particular that, up to isomorphism, in each dimension $2$, $4$, or $8$, there is a unique Hurwitz (or para-Hurwitz) algebra with isotropic norm. These are the \emph{split Hurwitz (or para-Hurwitz) algebras}. Over a field 
$\FF$, the split Hurwitz algebras are, up to isomorphism, 
$\FF\times\FF$, $\Mat_2(\FF)$, and the split Cayley algebra $\cC_s(\FF)$.

The situation for Okubo algebras is not that neat. Over many fields (see Corollaries \ref{co:not3_isotropic} and \ref{co:3_isotropic}), the norm of any Okubo algebra is isotropic, but this does not imply that the Okubo algebra is the split one.

A characteristic feature of the Okubo algebras with isotropic norm is that they are endowed with a grading by $(\ZZ/3)^2$: 
$\cO=\bigoplus_{0\neq a\in (\ZZ/3)^2}\cO_a$, where the dimension of all (nonzero) homogeneous components is $1$.

\smallskip

The paper is structured as follows. The first section will provide the basic definitions about composition algebras. The split Okubo algebra will be defined in a very precise way. Okubo algebras will then be defined as the twisted forms of the split one. 
Section \ref{se:Okubo}  will review the main known results on Okubo algebras. 

The short Section \ref{se:finite} will be devoted to showing that over any finite field the only Okubo algebra is, up to isomorphism, the split one. This will be instrumental in 
Section \ref{se:special}, where the Okubo algebras with 
isotropic norm will be characterized in terms of the existence 
of some gradings on them. The 
automorphisms groups of these gradings will be computed.  The largest 
group that appears is the projective unitary group $\PU(3,2^2)$. 
The classical group $\PSU(3,2^2)$ is not simple, and its structure will 
become clear by means of the only Okubo algebra over the field of 
four elements. 

\smallskip

All the algebras considered in this paper will be defined over a ground field 
$\FF$.

\section{Composition algebras}\label{se:intro}

A quadratic form $\norm:V\rightarrow \FF$ on a vector space $V$ over a field $\FF$ is said to be \emph{nondegenerate} if so is its polar form:
\[
\norm(x,y)\bydef \norm(x+y)-\norm(x)-\norm(y),
\]
that is, if its radical $V^\perp\bydef\{v\in V:\norm(v,V)=0\}$ is trivial.
Moreover, $\norm$ is said to be \emph{nonsingular} if either it is nondegenerate or it satisfies that the dimension of $V^\perp$ is $1$ and $\norm(V^\perp)\neq 0$. The last possibility only occurs over fields of characteristic $2$.

\begin{definition}\label{df:composition}
A \emph{composition algebra} over a field $\FF$ is a triple $(\cC,\cdot,\norm)$ where
\begin{itemize}
\item $(\cC,\cdot)$ is a nonassociative algebra, and
\item $\norm:\cC\rightarrow\FF$ is a nonsingular quadratic form which is \emph{multiplicative}, that is,
\begin{equation}\label{eq:nxy}
\norm(x\cdot y)=\norm(x)\norm(y)
\end{equation}
for any $x,y\in\cC$.
\end{itemize}
\end{definition}

The unital composition algebras are called \emph{Hurwitz algebras}. These have the property that any element satisfies the degree $2$ equation:
\[
x^{\cdot 2}-\norm(x,1)x+\norm(x)1=0.
\]
The map $x\mapsto \overline{x}=\norm(x,1)1-x$
is both an involution and an isometry, and it satisfies
$\norm(x\cdot y,z)=\norm(y,\overline{x}\cdot z)=
\norm(x,z\cdot\overline{y})$ for all $x,y,z$.

For simplicity, we will refer to the composition algebra $\cC$, instead of $(\cC,\cdot,\norm)$.

The well-known \emph{Generalized Hurwitz Theorem} (see e.g.
\cite[(33.17)]{KMRT}) asserts that the Hurwitz algebras are, up to isomorphism, either $\FF$,  quadratic \'etale $\FF$-algebras, quaternion algebras over $\FF$, or Cayley algebras over $\FF$. 

In particular, the dimension of a Hurwitz algebra is restricted to 
$1$, $2$, $4$, or $8$. Moreover, two Hurwtiz algebras are isomorphic if and only if their norms are isometric.

It turns out that the dimension of any finite-dimensional not necessarily unital composition algebra is restricted too to $1$, $2$, $4$, or $8$, but there are examples of nonunital composition algebras of arbitrary infinite dimension \cite{EP_infinite}.

\medskip

\begin{definition}
A composition algebra $(\cS,*,\norm)$ is said to be a \emph{symmetric
composition algebra} if 
$\norm(x*y,z)=\norm(x,y*z)$ for all $x,y,z\in \cS$.
\end{definition}

The condition on the definition above is equivalent to 
\begin{equation}\label{eq:xyx}
(x*y)*x=x*(y*x)=\norm(x)y
\end{equation} 
for all $x,y\in\cS$.

The dimension of any symmetric composition algebra is finite, and hence restricted to $1,2,4$, or $8$.

\medskip

The first examples of symmetric composition algebras are given as follows. Let $(\cC,\cdot,\norm)$ be a Hurwitz algebra and consider the composition algebra $(\cC,\bullet,\norm)$ with the new product given by
\[
    x\bullet y=\overline{x}\cdot\overline{y}.
\]
Then $\norm(x\bullet y,z)=\norm(\overline{x}\cdot\overline{y},z)=\norm(\overline{x},z\cdot y)=\norm(x,\overline{z\cdot y})=\norm(x,y\bullet z)$,
    for any $x,y,z$, so that $(\cC,\bullet,\norm)$ is a symmetric composition algebra. (Note that $1\bullet x=x\bullet 1=\overline{x}=\norm(x,1)1-x$  for any $x$: $1$ is a \emph{para-unit} of $(\cC,\bullet,\norm)$.)

These symmetric composition algebras are called \emph{para-Hurwitz}. 

It is easy to prove that two para-Hurwitz algebras are isomorphic if and only if so are the corresponding Hurwitz algebras.

\medskip

But apart from para-Hurwitz algebras, there is a new class of 
eight-dimensional symmetric composition algebras with different properties. These are the Okubo algebras.

Let $\Mat_3(\FF)$ be the associative algebra of $3\times 3$-matrices and let 
$\frsl(3,\FF)$ be the corresponding special Lie algebra, consisting  of the zero trace matrices. Any element $a\in\Mat_3(\FF)$ is a root of its characteristic polynomial:
\[
\det(X 1-a)=X^3-\trace(a)X^2+\sr(a)X-\det(a)
\]
where $\trace(a)$ is the trace, $\det(a)$ the determinant, and $\sr(a)$ is a quadratic form 
on the coordinates of $a$ which, if the characteristic is not $2$, equals 
$\frac{1}{2}\left(\trace(a)^2-\trace(a^2)\right)$.
(The same happens for any central simple associative algebra of degree $3$. In this case $\trace$ is the generic trace, $\sr$ a suitable quadratic form, and $\det$ its generic norm.)

The polar form of the quadratic form $\sr$: $\sr(a,b)\bydef \sr(a+b)-\sr(a)-\sr(b)$, satisfies (in 
any characteristic)
\[
\sr(a,b)=\trace(a)\trace(b)-\trace(ab)
\]
for any $a,b\in\Mat_3(\FF)$. If the characteristic of $\FF$ is not $3$, then
the restriction of the quadratic form $\sr$ to $\frsl(3,\FF)=1^\perp$ is nondegenerate. 

Assume for a while that the characteristic of our ground field $\FF$ is not $3$, and that $\FF$ contains a primitive cubic root $\omega$ of $1$.

On $\frsl(3,\FF)$, define a new multiplication and a quadratic form as follows:
\begin{equation}\label{eq:*omega}
\begin{split}
x*y&=\omega xy-\omega^2 yx-\frac{\omega-\omega^2}{3}\trace(xy),
\\[6pt]
\norm(x)&=\sr(x).
\end{split}
\end{equation}
A straightforward computation (see \cite{EM93}) gives
\[
\norm(x*y)=\norm(x)\norm(y),\quad (x*y)*x=x*(y*x)=\norm(x)y
\]
for all $x,y\in\frsl(3,\FF)$ and hence $\bigl(\frsl(3,\FF),*,\norm)$ is an eight-dimensional symmetric composition algebra.

In particular, take $\FF=\CC$, the field of complex numbers, and consider the Pauli matrices:
\begin{equation}\label{eq:Pauli_xy}
x=\begin{pmatrix}
1&0&0\\
0&\omega&0\\
0&0&\omega^2
\end{pmatrix}\,,\qquad
y=\begin{pmatrix}
0&1&0\\
0&0&1\\
1&0&0
\end{pmatrix}\,,
\end{equation}
in $\Mat_3(\CC)$, which satisfy
\begin{equation}\label{eq:Pauli}
x^3=y^3=1,\quad yx=\omega xy.
\end{equation}
For $i,j\in\ZZ/3$, $(i,j)\ne (0,0)$, define
\begin{equation}\label{eq:zij}
z_{i,j}\bydef \frac{\omega^{-ij}}{\omega-\omega^2}x^iy^j.
\end{equation}
Then $\{z_{i,j}: (i,j)\ne (0,0)\}$ is a basis of $\frsl(3,\CC)$.

Using \eqref{eq:Pauli} we obtain:
\begin{equation}\label{eq:split}
z_{i,j}*z_{i',j'}=\begin{cases}
0&\text{if $\Delta=2$ or $i+i'=0=j+j'$,}\\
-z_{i+i',j+j'}&\text{if $\Delta=1$,}\\
z_{i+i',j+j'}&\text{otherwise,}
\end{cases}
\end{equation}
where 
$\Delta=\det\left(\begin{smallmatrix} i&j\\ i'&j'\end{smallmatrix}
\right)$.
Moreover, we get $\norm(z_{i,j})=0$ for any $(i,j)\neq (0,0)$,  and
$\norm(z_{i,j},z_{i',j'})$ is $1$ for $i+i'=0=j+j'$ and $0$ otherwise.

Thus the $\ZZ$-span
\[
\cO_\ZZ=\ZZ\text{-}\espan{z_{i,j}\mid i,j\in\ZZ/3,\ (i,j)\neq(0,0)}
\]
is closed under $*$, and $\norm$ restricts to a nonsingular multiplicative quadratic form on $\cO_\ZZ$.

This allows us to define Okubo algebras over arbitrary fields:

\begin{definition}\label{df:Okubo}
The algebra $\cO_\FF\bydef\cO_\ZZ\otimes_{\ZZ}\FF$, with the induced multiplication and nonsingular quadratic form, is called the \emph{split Okubo algebra} over the field $\FF$.

The twisted forms of $(\cO_\FF,*,\norm)$ are called \emph{Okubo algebras}.
\end{definition}

\begin{remark} This is not the original definition of these algebras given by Okubo \cite{Okubo78} and Okubo and Osborn \cite{OO81b}, but it is equivalent to it.
\end{remark}

Let $\overline{\FF}$ be an algebraic closure of the field $\FF$. Take 
$0\neq \alpha,\beta\in\FF$, and consider cubic roots $\alpha^{1/3}$ and 
$\beta^{1/3}$ in $\overline{\FF}$. Then the elements in 
$\cO\otimes_\ZZ\overline{\FF}$ given by
\[
z_{1,0}\otimes\alpha^{1/3},\quad z_{0,1}\otimes\beta^{1/3}
\]
generate, by multiplication and linear combinations with coefficients in 
$\FF$, a twisted form of the split Okubo algebra, with a basis (over 
$\FF$!) consisting of the elements 
$\tilde z_{i,j}=z_{i,j}\otimes \alpha^{i/3}\beta^{j/3}$, 
for $0\leq i,j\leq 2$, 
$(i,j)\neq (0,0)$. 

This Okubo algebra will be denoted by $\cO_{\alpha,\beta}$, with $\cO_{1,1}$ being the split Okubo algebra. The multiplication table in this basis is given in Figure \ref{fig:table_ab}.

\begin{figure}[h!]\label{fig:table_ab}
\[
\vbox{\offinterlineskip
\halign{\hfil\,$#$\enspace\hfil&#\vreglon
 &\hfil\enspace$#$\enspace\hfil
 &\hfil\enspace$#$\enspace\hfil&#\vregleta
 &\hfil\enspace$#$\enspace\hfil
 &\hfil\enspace$#$\enspace\hfil&#\vregleta
 &\hfil\enspace$#$\enspace\hfil
 &\hfil\enspace$#$\enspace\hfil&#\vregleta
 &\hfil\enspace$#$\enspace\hfil
 &\hfil\enspace$#$\enspace\hfil&#\vreglon\cr
* &\omit\hfil\vrule width 1pt depth 4pt height 10pt
   &\tilde z_{1,0}  &\tilde z_{2,0}&
    &\tilde z_{0,1}&\tilde z_{0,2}&
  &\tilde z_{1,1}&\tilde z_{2,2}&
    &\tilde z_{1,2}&\tilde z_{2,1}&\cr
 \noalign{\hreglon}
 \tilde z_{1,0}&&\tilde z_{2,0}&0&
   &-\tilde z_{1,1}&0&&-\tilde z_{2,1}&0&&0&-\alpha\tilde z_{0,1}&\cr
 \tilde z_{2,0}&&0&\alpha\tilde z_{1,0}&
   &0&-\tilde z_{2,2}&&0&-\alpha\tilde z_{1,2}&
    &-\alpha\tilde z_{0,2}&0&\cr
 \multispan{13}{\hregletafill}\cr
 \tilde z_{0,1}&&0&-\tilde z_{2,1}&
      &\tilde z_{0,2}&0&&0&-\beta\tilde z_{2,0}&
         &0&-\tilde z_{2,2}&\cr
 \tilde z_{0,2}&&-\tilde z_{1,2}&0&&0&\beta\tilde z_{0,1}&
       &-\beta\tilde z_{1,0}&0&&-\beta\tilde z_{1,1}&0&\cr
 \multispan{13}{\hregletafill}\cr
 \tilde z_{1,1}&&0&-\alpha\tilde z_{0,1}&
      &-\tilde z_{1,2}&0&&\tilde z_{2,2}&0&
         &-\beta\tilde z_{2,0}&0&\cr
 \tilde z_{2,2}&&-\alpha\tilde z_{0,2}&0&
      &0&-\beta\tilde z_{2,1}&&0&\alpha\beta\tilde z_{1,1}&
            &0&-\alpha\beta\tilde z_{1,0}&\cr
 \multispan{13}{\hregletafill}\cr
 \tilde z_{1,2}&&-\tilde z_{2,2}&0&
     &-\beta\tilde z_{1,0}&0&&0&-\alpha\beta\tilde z_{0,1}&
            &\beta\tilde z_{2,1}&0&\cr
 \tilde z_{2,1}&&0&-\alpha\tilde z_{1,1}&
       &0&-\beta\tilde z_{2,0}&&-\alpha\tilde z_{0,2}&0&
          &0&\alpha\tilde z_{1,2}&\cr
 \multispan{14}{\hreglonfill}\cr}}
\]
\caption{Multiplication table of $\cO_{\alpha,\beta}$}\label{fig:splitOkubo}
\end{figure}

\bigskip

\section{Classification of Okubo algebras}\label{se:Okubo}

The classification of Okubo algebras has a different flavor depending on the
characteristic of the ground field being $\neq 3$ or $3$. 

\subsection{Characteristic not $3$}\phantom{a}
If $\cA$ is a finite-dimensional associative algebra,  $\cA_0$ will denote the subspace of generic trace $0$ elements of $\cA$.

\begin{theorem}[\cite{EM91,EM93,Eld97}]\label{th:classification_not3}
Let $\FF$ be a field of characteristic not $3$ and let $\omega$ be a primitive cubic root of $1$ in an algebraic closure of $\FF$. 
\begin{romanenumerate}
\item If $\omega\in\FF$, then the Okubo algebras over $\FF$ are, up to 
isomorphism, exactly the algebras $(\cA_0,*,\norm)$, where $\cA$ is a central simple associative algebra over $\FF$ of degree $3$ and $*$ and $\norm$ are given by \eqref{eq:*omega}.

Two Okubo algebras are isomorphic if and only if so are the corresponding central simple associative algebras.

\item If $\omega\not\in\FF$, and $\KK=\FF[\omega]$,  then the Okubo algebras over 
$\FF$ are, up to 
isomorphism, exactly the algebras $\bigl(\Skew(\cA_0,\tau),*,\norm\bigr)$, where 
$\cA$ is a central simple associative algebra over $\KK$ of degree $3$, endowed with a $\KK/\FF$-involution $\tau$ of the second kind, 
$\Skew(\cA_0,\tau)$ is the subspace of generic trace $0$ elements 
$x$ with $\tau(x)=-x$, and $*$ and $\norm$ are given by \eqref{eq:*omega}.

Two Okubo algebras are isomorphic if and only if so are the corresponding pairs 
$(\cA,\tau)$, as $\KK$-algebras with involution.
\end{romanenumerate}
\end{theorem}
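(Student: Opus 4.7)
The plan is to set up a bijection between isomorphism classes of Okubo algebras and the associative objects on the right-hand side, by showing that the associative multiplication (or the pair with involution) can be \emph{recovered} from the Okubo product. The key computational input is obtained by manipulating \eqref{eq:*omega}: setting $c=(\omega-\omega^2)/3$, for $x,y\in\cA_0$ a direct calculation using $\omega^3=1$ and $\omega+\omega^2=-1$ yields
\[
\omega(x*y)+\omega^2(y*x)\,=\,(\omega^2-\omega)(xy)_0,
\]
where $(xy)_0$ denotes the generic-trace-free part of $xy\in\cA$. Together with $\trace(xy)=-\sr(x,y)=-\norm(x,y)$, this gives a closed formula expressing $xy\in\cA=\FF 1\oplus\cA_0$ in terms of $x*y$, $y*x$ and $\norm(x,y)$.

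For part (i), the assignment $\cA\mapsto(\cA_0,*,\norm)$ yields an Okubo algebra: after scalar extension to an algebraic closure $\Falg$, $\cA\otimes_\FF\Falg\cong\Mat_3(\Falg)$ produces $\frsl(3,\Falg)$ with its standard Okubo product, which by the Pauli computation \eqref{eq:Pauli_xy}--\eqref{eq:zij} is the split Okubo algebra $\cO_{\Falg}$. Conversely, any Okubo algebra $\cO$ is enlarged to $\cA\bydef\FF 1\oplus\cO$ with the associative product specified by the closed formula; associativity and the central-simple-of-degree-$3$ property hold after scalar extension to $\Falg$ (becoming standard facts on $\Mat_3$), hence they hold over $\FF$ as well. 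A linear $\phi\colon\cO\to\cO'$ preserves $*$ and $\norm$ if and only if its extension by $1\mapsto 1$ is an associative isomorphism $\cA\to\cA'$; combining with the Skolem--Noether theorem gives $\AAut(\cO)\cong\AAut(\cA)\cong\PGLs_1(\cA)$, and both functors classify the same Galois-cohomology set.

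For part (ii), base-change to $\KK=\FF[\omega]$. By (i) applied over $\KK$, $\cO\otimes_\FF\KK$ comes from a central simple $\KK$-algebra $\cA$ of degree $3$. The Galois generator $\sigma\in\Gal(\KK/\FF)$ acts on $\cO\otimes_\FF\KK$ as $1\otimes\sigma$; extend it to the $\FF$-linear map $\tau\colon\cA\to\cA$ defined by $\tau(\alpha\cdot 1+a)=\sigma(\alpha)\cdot 1-\sigma(a)$ for $\alpha\in\KK$, $a\in\cA_0$. Since the Galois twist swaps $\omega\leftrightarrow\omega^2$ and sends $c\mapsto -c$, the closed formula gives, after a short computation, $\tau(ab)=\tau(b)\tau(a)$; thus $\tau$ is an involution of the second kind on $\cA$, and $\Skew(\cA_0,\tau)$ is precisely the $\sigma$-fixed subspace $\cO\subset\cO\otimes_\FF\KK$. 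As in (i), $\FF$-isomorphisms of Okubo algebras correspond to $\FF$-isomorphisms of pairs $(\cA,\tau)$. The main obstacle is precisely this bookkeeping verification that $\tau$ is an antiautomorphism: the sign in $\sigma(c)=-c$, the swap $\sigma(\omega)=\omega^2$, and the minus sign in $\tau|_{\cA_0}$ must conspire exactly, and once this identity is in hand, the full classification follows from Galois descent applied to the induced isomorphism of automorphism group schemes.
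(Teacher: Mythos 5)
The paper does not prove Theorem \ref{th:classification_not3}: it is imported from \cite{EM91,EM93,Eld97}, so there is no internal proof to compare against. Your reconstruction strategy is essentially the standard one from those references, and your key identity is correct: from \eqref{eq:*omega} one computes $\omega(x*y)+\omega^2(y*x)=(\omega^2-\omega)\bigl(xy-\tfrac13\trace(xy)1\bigr)$ together with $\trace(xy)=-\norm(x,y)$ for $x,y\in\cA_0$, so the associative product is recoverable from $(*,\norm)$; and the sign bookkeeping for $\tau$ in part (ii) does conspire as you claim, since $\sigma(\omega^2-\omega)=-(\omega^2-\omega)$ cancels against the minus sign in $\tau|_{\cA_0}=-\sigma$ and $\sigma$ interchanges the coefficients $\omega$ and $\omega^2$ of $\sigma(a*b)$ and $\sigma(b*a)$.

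Two steps are left implicit and should be recorded to make the ``exactly'' in the statement airtight. First, in part (i) you need not only associativity of the reconstructed product by descent, but that the two constructions are mutually inverse: the generic-trace-zero part of the reconstructed algebra $\FF 1\oplus\cO$ is exactly $\cO$ (checked after extension to $\Falg$, where $\cO$ lands in $\frsl(3,\Falg)$), and \eqref{eq:*omega} applied to the reconstructed product returns the original $*$ (this holds because the linear system relating $\{x*y,\,y*x\}$ to $\{xy,\,yx\}$ is invertible when $\charac\FF\neq 3$). Second, in part (ii) your argument shows every Okubo algebra over $\FF$ is some $\Skew(\cA_0,\tau)$, but the surjectivity in the other direction also requires that for an \emph{arbitrary} pair $(\cA,\tau)$ the subspace $\Skew(\cA_0,\tau)$ is closed under $*$ and is an $\FF$-form of $\cA_0$; this is a one-line computation using $\sigma\bigl((\omega-\omega^2)/3\bigr)=-(\omega-\omega^2)/3$ and $\trace(ab)\in\FF$ for skew $a,b$, plus the observation that multiplication by a trace-zero element of $\KK$ interchanges the symmetric and skew parts. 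Both verifications are routine, so the outline is sound.
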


\begin{corollary}[{\cite[Corollary 8.9]{CEKT}}]\label{co:autos_not3}
Let $\FF$ be a field of characteristic not $3$ and let $\omega$ be a primitive cubic root of $1$ in an algebraic closure of $\FF$. 
\begin{romanenumerate}
\item 
If $\omega\in\FF$, then the group of automorphisms of the Okubo algebra 
$(\cA_0,*,\norm)$, as in Theorem \ref{th:classification_not3}.(i) is isomorphic to 
$\Aut(\cA)$. 

\item 
If $\omega\not\in\FF$, and $\KK=\FF[\omega]$,  then the group of automorphisms of
the  Okubo algebra $\bigl(\Skew(\cA_0,\tau),*,\norm\bigr)$ as in Theorem
\ref{th:classification_not3}.(ii) is isomorphic to $\Aut(\cA,\tau)$ (the group of
automorphisms of $\cA$ commuting with $\tau$).
\end{romanenumerate}
In both cases, any automorphism of $\cA$ (commuting with $\tau$ in (ii))
acts by restriction to $\cA_0$ or $\Skew(\cA_0)$, respectively.
\end{corollary}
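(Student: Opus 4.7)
\medskip

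The plan is to construct, in each case, a two-sided inverse to the restriction map $\Aut(\cA) \to \Aut(\cA_0, *, \norm)$ (resp.\ $\Aut(\cA,\tau) \to \Aut(\Skew(\cA_0,\tau), *, \norm)$), thereby showing it is a group isomorphism.

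That restriction is well-defined is easy: any $\FF$-algebra automorphism $\varphi$ of $\cA$ preserves the generic characteristic polynomial, hence the generic trace (so $\varphi(\cA_0)=\cA_0$) and the quadratic form $\sr=\norm$. Since $*$ is defined in \eqref{eq:*omega} from the associative product, $\tr$ and a scalar in $\FF$ (using $\omega\in\FF$), we see $\varphi|_{\cA_0}\in\Aut(\cA_0,*,\norm)$. In case (ii), if moreover $\varphi\tau=\tau\varphi$, then $\varphi$ preserves $\Skew(\cA_0,\tau)$. Injectivity is immediate: $\cA=\FF\cdot 1\oplus\cA_0$, and $\varphi(1)=1$.

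The key step is to invert the restriction, using a recovery formula for the associative product. Starting from
\[
x*y=\omega xy-\omega^{2} yx-\tfrac{\omega-\omega^{2}}{3}\tr(xy),\qquad
y*x=\omega yx-\omega^{2} xy-\tfrac{\omega-\omega^{2}}{3}\tr(xy),
\]
and observing that $\sr(x,y)=\tr(x)\tr(y)-\tr(xy)$ gives $\tr(xy)=-\norm(x,y)$ for $x,y\in\cA_0$, I would solve this linear system (the determinant $\omega^{2}-\omega$ is nonzero) to express $xy$ as an explicit $\FF[\omega]$-linear combination of $x*y$, $y*x$ and $\norm(x,y)\cdot 1$. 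The coefficient $\tfrac{1}{3}$ is legal because $\chr\FF\neq 3$.

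In case (i), since $\omega\in\FF$ the recovery formula has $\FF$-coefficients. Given $\varphi\in\Aut(\cA_0,*,\norm)$, define $\tilde\varphi\in\End_\FF(\cA)$ by $\tilde\varphi(1)=1$ and $\tilde\varphi|_{\cA_0}=\varphi$. Applying $\tilde\varphi$ to the recovery formula and using that $\varphi$ preserves $*$ and $\norm$ yields $\tilde\varphi(xy)=\tilde\varphi(x)\tilde\varphi(y)$ for $x,y\in\cA_0$; multiplicativity on the remaining $1$-components is trivial, so $\tilde\varphi\in\Aut(\cA)$ restricts to $\varphi$.

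In case (ii), the recovery formula has coefficients in $\KK=\FF[\omega]$, so $xy\in\cA$ for $x,y\in\Skew(\cA_0,\tau)$ lies in $\FF\cdot 1\oplus\Skew(\cA_0,\tau)\oplus\eta\,\Skew(\cA_0,\tau)$, with $\eta=\omega-\omega^{2}$. A short check using $\tau(\lambda x)=\bar\lambda\,\tau(x)$ shows $\cA_0=\Skew(\cA_0,\tau)\oplus\eta\,\Skew(\cA_0,\tau)$ (as $\FF$-spaces), i.e.\ $\cA_0=\KK\cdot\Skew(\cA_0,\tau)$. Thus, given $\varphi\in\Aut\bigl(\Skew(\cA_0,\tau),*,\norm\bigr)$, I would extend $\KK$-linearly by $\tilde\varphi(\lambda x):=\lambda\,\varphi(x)$ for $\lambda\in\KK$ and $x\in\Skew(\cA_0,\tau)$, then set $\tilde\varphi(1)=1$. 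Commutation with $\tau$ follows from the defining $\tau(\lambda x)=-\bar\lambda\,x$ on skew elements, and multiplicativity follows (as in case (i)) from the recovery formula, using also that $\tr(xy)\in\FF$ for $x,y\in\Skew(\cA_0,\tau)$ (since $\tau(xy)=yx$ and $\tau|_\KK$ is the nontrivial Galois automorphism).

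The main obstacle is purely bookkeeping in (ii): one must carefully verify that the $\KK$-linear extension of $\varphi$ is compatible with the recovery formula, with the $\KK$-algebra product on $\cA$, and with $\tau$. Once the decomposition $\cA_0=\KK\cdot\Skew(\cA_0,\tau)$ and the reality $\tr(xy)\in\FF$ are in hand, the checks are routine.
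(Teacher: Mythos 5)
The paper does not prove this corollary at all --- it is quoted from \cite[Corollary 8.9]{CEKT}, where it is obtained from the equivalence between the groupoid of Okubo algebras and that of (pairs of) central simple algebras of degree $3$. Your argument is instead the direct, classical one (essentially the Elduque--Myung route): invert the linear system defining $*$ to recover the associative product as $xy=\frac{\omega}{\omega^2-\omega}(x*y)+\frac{\omega^2}{\omega^2-\omega}(y*x)-\frac{1}{3}\norm(x,y)1$, and use this to extend any automorphism of the Okubo algebra to $\cA$. This is a legitimate, self-contained proof, and it correctly explains why only $\KK$-linear (not $\KK$-semilinear) automorphisms of $(\cA,\tau)$ appear in case (ii): a semilinear map conjugates the coefficients $\omega,\omega^2$ and so fails to preserve $*$. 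Case (i) is complete as written, and the reduction of case (ii) to the identity $\cA_0=\KK\cdot\Skew(\cA_0,\tau)$ together with $\tr(xy)\in\FF$ is the right skeleton.

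There is one concrete slip in case (ii): the statement covers all characteristics other than $3$, and in characteristic $2$ your element $\eta=\omega-\omega^2$ equals $\omega+\omega^2=1\in\FF$, so $\eta\,\Skew(\cA_0,\tau)=\Skew(\cA_0,\tau)$ and the claimed decomposition $\cA_0=\Skew(\cA_0,\tau)\oplus\eta\,\Skew(\cA_0,\tau)$ is false; the implicit symmetric/skew splitting $a=\frac{1}{2}(a-\tau(a))+\frac{1}{2}(a+\tau(a))$ also needs $2$ invertible. The fact you actually need, $\cA_0=\KK\cdot\Skew(\cA_0,\tau)$, remains true in characteristic $2$: replace $\eta$ by $\omega$ (one checks $\Skew(\cA_0,\tau)\cap\omega\,\Skew(\cA_0,\tau)=0$ since $x=\omega y$ with both skew forces $(\omega-\omega^2)y=0$, and then an $\FF$-dimension count, or plain Galois descent for the semilinear involution $a\mapsto-\tau(a)$, gives equality). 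With that repair, and a one-line remark that injectivity in case (ii) follows from $\cA=\KK 1\oplus\KK\cdot\Skew(\cA_0,\tau)$, your proof is complete.
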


The fact that  any central simple division algebra over a field containing a primitive cubic root of unity is cyclic (see e.g. \cite[p.~286]{Pierce}) has the following consequence:

\begin{corollary}[{\cite[Proposition 7.3]{EM93}}]\label{co:not3_isotropic}
Let $\FF$ be a field of characteristic not $3$ containing a primitive cubic root of $1$. Then the norm of any Okubo algebra over $\FF$ is isotropic.
\end{corollary}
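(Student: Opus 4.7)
The plan is to use Theorem~\ref{th:classification_not3}(i) to reduce the statement to a claim about central simple associative $\FF$-algebras of degree $3$: since $\omega\in\FF$, every Okubo algebra over $\FF$ is, up to isomorphism, of the form $(\cA_0,*,\sr)$ for such an $\cA$. So it suffices to exhibit in every such $\cA$ a nonzero element $x\in\cA_0$ with $\sr(x)=0$.

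By Wedderburn's theorem, either $\cA\cong\Mat_3(\FF)$ is split, or $\cA$ is a central simple division algebra of degree $3$. In the split case the claim is immediate: the element $z_{1,0}$ of \eqref{eq:zij} was already seen to satisfy $\norm(z_{1,0})=0$, or equivalently the Pauli matrix $\diag(1,\omega,\omega^2)$ has reduced characteristic polynomial $X^3-1$ and so both trace and $\sr$ equal to zero.

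In the non-split case, the hypothesis $\omega\in\FF$ is precisely what is needed to invoke the cited cyclicity theorem for degree-$3$ central simple division algebras: there is then an element $x\in\cA$ with $x^3=\alpha\in\FF^\times$ and $\FF[x]$ a cubic subfield of $\cA$. In particular $x\notin\FF$, so the minimal polynomial of $x$ over $\FF$ has degree $3$ and equals $X^3-\alpha$. Since for a central simple algebra of degree~$3$ the reduced characteristic polynomial of any element is monic of degree $3$ and divisible by its minimal polynomial, both polynomials must coincide. Reading off the coefficients of $X^3-\alpha$ gives $\trace(x)=0$ and $\sr(x)=0$, so $x\in\cA_0$ with $\norm(x)=0$ and isotropy is established.

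No real obstacle is involved here: the corollary is essentially a short consequence of the two inputs already cited (the classification in Theorem~\ref{th:classification_not3}(i) and the cyclicity of degree-$3$ central simple division algebras over fields containing~$\omega$). The only point worth spelling out is the identification of the reduced characteristic polynomial of $x$ with $X^3-\alpha$, which uses the divisibility relation with the minimal polynomial together with the matching degrees.
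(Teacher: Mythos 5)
Your proof is correct and follows essentially the same route as the paper: the key input in both is the cyclicity of degree-$3$ central simple division algebras over a field containing $\omega$, which yields an element $x\notin\FF$ with $x^3=\alpha\in\FF^\times$ and hence (as you verify by identifying the reduced characteristic polynomial with $X^3-\alpha$) a nonzero isotropic vector of $\sr$ in $\cA_0$. The paper packages this slightly differently, identifying the whole algebra as a symbol algebra and hence the Okubo algebra as some $\cO_{\alpha,\beta}$, all of whose basis vectors $\tilde z_{i,j}$ are isotropic (Remark~\ref{re:symbol}), but the underlying idea is the same.
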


\begin{remark}\label{re:symbol}
Actually, if our field $\FF$ contains a primitive cubic root $\omega$ of $1$,  any central simple algebra $\cA$ of degree $3$ is a \emph{symbol algebra}, that is, it is isomorphic to the algebra 
$\bigl(\alpha,\beta\bigr)_{\FF,\omega}$ for some nonzero scalars 
$\alpha,\beta\in\FF$, defined as the algebra generated by
two elements $x,y$ subject to the relations (compare with
 \eqref{eq:Pauli})
\[
x^3=\alpha,\quad y^3=\beta,\quad yx=\omega xy.
\]
It is clear that then $\cA_0$, with multiplication and norm as in 
\eqref{eq:*omega}, is
isomorphic to the Okubo algebra $\cO_{\alpha,\beta}$.

If our field $\FF$ has characteristic $\neq 3$ but does not contain
a primitive cubic root of $1$, consider the quadratic field extension 
$\KK=\FF[\omega]$  as in 
Theorem \ref{th:classification_not3}. For 
$0\neq \alpha,\beta\in\FF^\times$, the symbol algebra 
$\bigl(\alpha,\beta\bigr)_{\KK,\omega}$ is endowed with a second 
kind involution $\tau$ fixing the generators $x$ and $y$. Then
\[
\tau\left(\frac{\omega^{-ij}}{\omega-\omega^2}x^iy^j\right)=
\frac{\omega^{ij}}{\omega^2-\omega}y^jx^i=
-\frac{\omega^{-ij}}{\omega-\omega^2}x^iy^j,
\]
and it turns out that the Okubo algebra $\Skew(\cA_0,\tau)$, 
with multiplication and norm as in 
\eqref{eq:*omega}, is
isomorphic to the Okubo algebra $\cO_{\alpha,\beta}$.
\end{remark}

Among the Okubo algebras with isotropic norm, the split one is characterized as follows:

\begin{theorem}[{\cite[Theorem 5.9]{Canadian}}]\label{th:not3_split}
An Okubo algebra over a field of characteristic not $3$ is split if and only if its norm is isotropic and it contains a nonzero idempotent ($e*e=e$).
\end{theorem}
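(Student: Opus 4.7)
The plan is to invoke the classification of Okubo algebras in Theorem~\ref{th:classification_not3} and extract, from the idempotent, an algebraic constraint strong enough to pin down the ambient central simple algebra of degree~$3$.

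Direction $(\Rightarrow)$: The split Okubo algebra has a basis of norm-zero elements (see \eqref{eq:split} and the subsequent discussion), so its norm is isotropic. To exhibit an idempotent, I work in the matrix realization $\frsl(3,\KK)$ (with $\KK=\FF$ when $\omega\in\FF$, and $\KK=\FF[\omega]$ otherwise) and propose $e=\frac{1}{\omega-\omega^2}\diag(2,-1,-1)$; since $\overline{\omega-\omega^2}=-(\omega-\omega^2)$, this $e$ lies in $\Skew(\cA_0,\tau)$ in the second-kind case, and a direct computation using \eqref{eq:*omega} verifies $e*e=e$.

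Direction $(\Leftarrow)$: Let $\cO$ be an Okubo algebra with isotropic norm and nonzero idempotent $e$. From \eqref{eq:xyx} with $x=y=e$ we obtain $\norm(e)=1$. By Theorem~\ref{th:classification_not3}, $\cO\subseteq\cA_0$ where $\cA$ is central simple of degree $3$ over $\FF$ (resp.\ over $\KK=\FF[\omega]$) and the product is given by \eqref{eq:*omega}. Expanding $e*e=e$ and using $\trace(e)=0$ yields
\[
(\omega-\omega^2)\,e^2-\tfrac{\omega-\omega^2}{3}\trace(e^2)\,1=e,
\]
so $e$ satisfies a quadratic equation $e^2=\lambda e+\mu$ in $\cA$, with $\lambda=1/(\omega-\omega^2)$ and $\mu\in\KK$ determined by $\sr(e)=\norm(e)=1$. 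Since $\trace(e)=0$ and $e\neq0$, $e$ is not a scalar, so $\KK[e]\subset\cA$ is a commutative $\KK$-subalgebra of $\KK$-dimension exactly $2$. But any subfield of a central simple division $\KK$-algebra of degree $3$ has $\KK$-dimension $1$ or $3$, so $\cA$ cannot be a division algebra: $\cA\cong\Mat_3(\KK)$.

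When $\omega\in\FF$ this already identifies $\cO\cong\cA_0$ with the split Okubo algebra (and by Corollary~\ref{co:not3_isotropic} the isotropic-norm hypothesis was automatic). The crux of the proof is the remaining case $\omega\notin\FF$, where $\cA\cong\Mat_3(\KK)$ still admits distinct second-kind involutions, classified up to isomorphism by the isometry class of an associated rank-$3$ Hermitian form over $\KK/\FF$. The plan here is to write $\tau(a)=h^{-1}\bar a^t h$ for a Hermitian $h$, identify $\Skew(\cA_0,\tau)$ with the trace-zero part of $\frsu(h)$ carrying the norm $-\tfrac{1}{2}\trace(a^2)$, and then use a nonzero isotropic $a$ to produce an isotropic vector of $h$ (via an eigenspace analysis of $a$ acting on $\KK^3$, exploiting that $h$-orthogonality between distinct eigenspaces yields isotropy when eigenvalues are not paired under the Galois involution). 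Once $h$ is known to be isotropic, the classification of rank-$3$ isotropic Hermitian forms (unique up to isomorphism and scaling) forces $(\cA,\tau)$ to be the split pair, whence $\cO$ is the split Okubo algebra.
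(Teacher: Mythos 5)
The paper itself gives no proof of this statement (it is quoted from \cite[Theorem 5.9]{Canadian}), so your argument has to stand on its own. Most of it does: the element $e=\frac{1}{\omega-\omega^2}\diag(2,-1,-1)$ is indeed an idempotent of the split algebra (it is $\tilde z_{1,0}+\tilde z_{2,0}$ in the notation of Figure \ref{fig:table_ab} with $\alpha=1$), and your key reduction is correct and attractive: $e*e=e$ forces $e^2\in\KK e+\KK 1$ with $e\notin\KK 1$ (as $\trace(e)=0$, $e\neq 0$, $\charac\FF\neq 3$), so $\KK[e]$ is a quadratic commutative subalgebra and $\cA$ cannot be a degree-$3$ division algebra; this settles case (i) of Theorem \ref{th:classification_not3} outright.

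The gap is exactly at the step you flag as the crux in case (ii): producing an isotropic vector of the hermitian form $h$ from an isotropic element of the Okubo norm. Take $0\neq a\in\Skew(\cA_0,\tau)$ with $\norm(a)=\sr(a)=0$; since also $\trace(a)=0$, its characteristic polynomial is $X^3-\det(a)$. Your eigenspace mechanism works when $a$ is nilpotent (then $h(au,au)=-h(u,a^2u)$ etc.\ gives an isotropic vector) and when $\det(a)$ is a nonzero cube in $\KK$ (the three eigenvalues $\lambda,\omega\lambda,\omega^2\lambda$ lie in $\KK$, they cannot all satisfy $\mu+\tau(\mu)=0$, and an eigenvector for one with $\mu+\tau(\mu)\neq 0$ is $h$-isotropic). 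But when $\det(a)$ is not a cube in $\KK$, the polynomial $X^3-\det(a)$ is irreducible, $\KK[a]$ is a cubic field extension, and $a$ has \emph{no} eigenvectors in $\KK^3$; your sketch says nothing about this case, and it genuinely occurs — already in the split Okubo algebra over $\QQ$ the isotropic element $\tilde z_{1,0}+\tilde z_{0,1}$ has determinant equal to $2$ times a cube, hence a non-cube in $\QQ[\omega]$. Closing this requires an additional idea: either an odd-degree (Springer-type) descent for hermitian forms along $\KK[a]/\KK$, or an argument that an isotropic element with reducible characteristic polynomial always exists, or — the cleanest route — using that the norm of an eight-dimensional composition algebra is a $3$-fold Pfister form, so ``isotropic'' means ``hyperbolic'', and then comparing Witt classes of the Okubo norm (which for diagonal $h=\langle d_1,d_2,d_3\rangle$ is the norm form of $\KK/\FF$ tensored with $\langle 1,d_1d_2,d_1d_3,d_2d_3\rangle$) with the trace form of $h$ to conclude that $h$ is isotropic. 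As written, the implication ``Okubo norm isotropic $\Rightarrow$ $h$ isotropic'' is asserted but not established.
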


\medskip

\subsection{Characteristic $3$}\phantom{a}
In characteristic $3$, the classification does not follow the same lines:

\begin{theorem}[\cite{Eld97}]\label{th:classification_3}
Up to isomorphism, the Okubo algebras over a field $\FF$ of characteristic $3$ are precisely the algebras $\cO_{\alpha,\beta}$ for $\alpha,\beta\in\FF^\times$.
\end{theorem}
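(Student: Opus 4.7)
The plan is to produce, inside any Okubo algebra $\cO$ over a field $\FF$ of characteristic $3$, a pair of distinguished elements $u,v$ whose products reproduce the table of Figure \ref{fig:splitOkubo} for some $\alpha,\beta\in\FF^\times$; equivalently, to transfer the fine $(\ZZ/3)^2$-grading of the split Okubo algebra to $\cO$. Since $\cO$ is by definition a twisted form of $\cO_\FF$, the algebra $\cO\otimes_\FF\Falg$ inherits the $(\ZZ/3)^2$-grading whose nonzero homogeneous components are the lines $\Falg\, z_{i,j}$, and the content of the theorem is that this grading descends to $\FF$.

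The cleanest route I would take is cohomological. I would identify, inside the automorphism group scheme $\AAut(\cO_\FF)$ in characteristic $3$, the diagonalizable subgroup scheme $\bmu_3^2$ acting by the characters of the grading, and argue that in characteristic $3$ every $\FF$-form of $\cO_\FF$ already comes from a $\bmu_3^2$-torsor; equivalently, that the map $H^1(\FF,\bmu_3^2)\to H^1(\FF,\AAut(\cO_\FF))$ is surjective. Via the Kummer-type identification
\[
H^1(\FF,\bmu_3^2)\;\cong\;\FF^\times/(\FF^\times)^3\times\FF^\times/(\FF^\times)^3 ,
\]
the twisted form attached to $([\alpha],[\beta])$ is, by the very construction of $\cO_{\alpha,\beta}$ (it is obtained by rescaling the generators $z_{1,0}$ and $z_{0,1}$ by $\alpha^{1/3}$ and $\beta^{1/3}$), isomorphic to $\cO_{\alpha,\beta}$. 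Crucially, in characteristic $3$ it is the infinitesimal $\bmu_3^2$ rather than the étale $\ZZ/3\times\ZZ/3$ that governs the grading, because no primitive cube root of unity exists apart from $1$; this is what removes any Galois obstruction and yields a bare parametrization by two scalars up to cubes.

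Alternatively and more concretely, one can construct the generators by hand. Invoking the isotropy of the norm in characteristic $3$ (Corollary \ref{co:3_isotropic}), choose $u\in\cO$ with $\norm(u)=0$ and $u*u\neq 0$; then $\FF u\oplus\FF(u*u)$ is a totally isotropic $*$-closed plane, and genericity ensures that $(u*u)*(u*u)=\alpha u$ for some $\alpha\in\FF^\times$. A second element $v$ with analogous properties is extracted from a Peirce-like complement orthogonal to $\FF u\oplus\FF(u*u)$, providing the scalar $\beta$. The remaining six basis vectors of $\cO$ are then forced as $*$-products of $u$ and $v$, and the identities $(x*y)*x=x*(y*x)=\norm(x)y$ together with the $(\ZZ/3)^2$-grading induced by $u$ and $v$ pin down the entire multiplication table of Figure \ref{fig:splitOkubo}; the matching is substantially simplified in characteristic $3$, where the relation $\omega=1$ collapses many sign factors.

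The principal obstacle in either approach is producing the generators $u$ and $v$ with the right properties: in the cohomological route it takes the form of proving surjectivity of $H^1(\FF,\bmu_3^2)\to H^1(\FF,\AAut(\cO_\FF))$, and in the direct route it takes the form of ensuring that $u*u\neq 0$ and that a compatible $v$ exists in the orthogonal complement. Both obstacles rest on the specific shape of $\AAut(\cO_\FF)$ in characteristic $3$, and once cleared the remainder of the argument is the routine check that the resulting structure constants agree with those of $\cO_{\alpha,\beta}$.
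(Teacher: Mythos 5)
The paper does not actually prove this theorem: it is quoted from \cite{Eld97}, and the only related machinery supplied here is Lemma \ref{le:technical}, which reduces the identification of an eight-dimensional symmetric composition algebra with some $\cO_{\alpha,\beta}$ to exhibiting a pair $x,y$ of isotropic elements with $\norm(x,x*x)\neq 0\neq\norm(y,y*y)$ and $\norm(\FF x+\FF x*x,\FF y+\FF y*y)=0$. Your proposal correctly locates the crux there --- both your cohomological route (surjectivity of $H^1(\FF,\bmu_3^2)\to H^1(\FF,\AAut(\cO_\FF))$ in flat cohomology) and your concrete route (producing $u$ and $v$) are restatements of exactly that existence problem --- but neither route resolves it, and what you label ``the principal obstacle'' is not a technical check to be cleared afterwards: it is the entire content of the theorem.

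Two specific problems. First, the concrete route is circular: you invoke Corollary \ref{co:3_isotropic} to produce the isotropic element $u$, but in the paper that corollary is a consequence of the very theorem being proved (each $\cO_{\alpha,\beta}$ visibly has isotropic norm since $\norm(\tilde z_{i,j})=0$); an independent proof of isotropy in characteristic $3$ is part of what \cite{Eld97} must establish. Moreover ``genericity ensures $(u*u)*(u*u)=\alpha u$ with $\alpha\in\FF^\times$'' is not an argument: linearizing \eqref{eq:xyx} gives $(u*u)*(u*u)=\norm(u,u*u)\,u$ for any isotropic $u$, so the whole question is whether an isotropic $u$ with $\norm(u,u*u)\neq 0$ exists, and likewise for a companion $v$ with $\norm(\FF u+\FF u*u,\FF v+\FF v*v)=0$; these existence statements are where all the work lies. (Also, $\FF u\oplus\FF(u*u)$ is not totally isotropic: $\norm$ restricts to the hyperbolic form $(s,t)\mapsto st\,\norm(u,u*u)$ on it.) Second, in the cohomological route the surjectivity of $H^1(\FF,\bmu_3^2)\to H^1(\FF,\AAut(\cO_\FF))$ is asserted on the heuristic that no primitive cube root of unity exists in characteristic $3$; but proving it requires the precise structure of the (non-smooth) automorphism group scheme $\AAut(\cO_\FF)$ in characteristic $3$ and control of the quotient by $\bmu_3^2$, and that structure theorem is itself a substantial result, logically downstream of (or equivalent to) the classification. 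In short, the framework is the right one, but the theorem is left unproved.
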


\begin{corollary}\label{co:3_isotropic}
The norm of any Okubo algebra over a field of characteristic $3$ is isotropic.
\end{corollary}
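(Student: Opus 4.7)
The plan is to combine Theorem \ref{th:classification_3} with an inspection of the multiplication table of $\cO_{\alpha,\beta}$ (Figure \ref{fig:splitOkubo}). By the classification theorem, every Okubo algebra over a field $\FF$ of characteristic $3$ is isomorphic to $\cO_{\alpha,\beta}$ for some $\alpha,\beta\in\FF^\times$, so it is enough to exhibit, for each such pair $(\alpha,\beta)$, a nonzero vector of $\cO_{\alpha,\beta}$ on which the norm vanishes.

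The natural candidates are the basis elements $\tilde z_{i,j}$ themselves. Reading the top-left portion of the table, we have
\[
\tilde z_{1,0}*\tilde z_{1,0}=\tilde z_{2,0},\qquad \tilde z_{2,0}*\tilde z_{1,0}=0.
\]
Applying the symmetric composition identity \eqref{eq:xyx} with $x=y=\tilde z_{1,0}$ gives
\[
\norm(\tilde z_{1,0})\,\tilde z_{1,0}=(\tilde z_{1,0}*\tilde z_{1,0})*\tilde z_{1,0}=\tilde z_{2,0}*\tilde z_{1,0}=0,
\]
and since $\tilde z_{1,0}\neq 0$ this forces $\norm(\tilde z_{1,0})=0$. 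Hence $\tilde z_{1,0}$ is an isotropic vector in $\cO_{\alpha,\beta}$, and the norm is isotropic as claimed.

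There is no real obstacle here: the only nontrivial input is Theorem \ref{th:classification_3}, which guarantees that in characteristic $3$ no further Okubo algebras arise beyond the $\cO_{\alpha,\beta}$ family; the isotropy is then immediate from the table. (In fact, the same computation shows that every basis vector $\tilde z_{i,j}$ is isotropic, in any characteristic; what is special about characteristic $3$ is only that the classification forces the algebra into this explicit form with this explicit basis.)
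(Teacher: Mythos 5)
Your proof is correct and follows essentially the same route the paper intends: reduce to $\cO_{\alpha,\beta}$ via Theorem \ref{th:classification_3} and observe that the basis vectors are isotropic (the paper records $\norm(z_{i,j})=0$ for $(i,j)\neq(0,0)$ already in Section \ref{se:intro}, which transfers to the $\tilde z_{i,j}$ since the norm is quadratic). Your derivation of $\norm(\tilde z_{1,0})=0$ from the table entries $\tilde z_{1,0}*\tilde z_{1,0}=\tilde z_{2,0}$ and $\tilde z_{2,0}*\tilde z_{1,0}=0$ via the identity \eqref{eq:xyx} is a valid, self-contained way to recover that fact.
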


In \cite{Eld97}, precise conditions for two Okubo algebras, over a field $\FF$ of characteristic $3$, to be isomorphic are given. In particular, if 
$\FF$ is perfect, the only Okubo algebra, up to isomorphism, is the split one.

Over fields of characteristic $3$ there exist three kinds of (nonzero) idempotents in Okubo algebras (see \cite{Canadian}):
\begin{description}
\item [Quaternionic] if the restriction of the norm to the centralizer of the idempotent has rank $4$.
\item[Quadratic] if the restriction of the norm to the centralizer of the idempotent has rank $2$.
\item[Singular] if the restriction of the norm to the centralizer of the idempotent has rank $1$.
\end{description}

The split Okubo algebra is characterized as follows:

\begin{theorem}[{\cite[Corollary 6.5]{Canadian}}]\label{th:3_split}
Let $\cO$ be an Okubo algebra over a field $\FF$ of characteristic $3$ with norm $\norm$. The following conditions are equivalent:
\begin{romanenumerate}
\item $\cO$ is split.
\item $\norm$ is isotropic and $\cO$ contains a singular idempotent.
\item $\norm$ is isotropic and $\cO$ contains a quaternionic idempotent.
\end{romanenumerate}
\end{theorem}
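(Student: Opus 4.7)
The plan is to prove the cycle (i)$\Rightarrow$(ii), (i)$\Rightarrow$(iii), (ii)$\Rightarrow$(i), and (iii)$\Rightarrow$(i). By Theorem~\ref{th:classification_3}, every Okubo algebra over $\FF$ (characteristic~$3$) is of the form $\cO_{\alpha,\beta}$, and by Corollary~\ref{co:3_isotropic} its norm is automatically isotropic, so the task really reduces to controlling when singular or quaternionic idempotents occur in the family $\cO_{\alpha,\beta}$.

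For the forward implications (i)$\Rightarrow$(ii) and (i)$\Rightarrow$(iii), I would exhibit one singular and one quaternionic idempotent directly in $\cO_{1,1}$ by inspecting Figure~\ref{fig:table_ab} with $\alpha=\beta=1$. A natural first candidate is $e=\tilde z_{1,0}+\tilde z_{2,0}$: the table immediately gives $e*e=\tilde z_{2,0}+\tilde z_{1,0}=e$. I would then compute $\Cent(e)=\{x\in\cO_{1,1}:e*x=x*e\}$ and the rank of $\norm$ restricted to it to identify its type, and look in another block of the $(\ZZ/3)^2$-grading for an analogous idempotent of the remaining type. The grading symmetries highlighted in the introduction narrow this search to a small finite set of linear combinations of basis elements.

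For the converses, suppose $\cO=\cO_{\alpha,\beta}$ contains a nonzero idempotent $e$ of the specified type. The strategy is to use $e$ and the structure of $\Cent(e)$ to extract a pair of elements of $\cO$ that replay the defining relations of $\tilde z_{1,0},\tilde z_{0,1}$ in $\cO_{1,1}$, thereby producing an explicit isomorphism $\cO\cong\cO_{1,1}$. In the singular case, the rank-$1$ restriction of $\norm$ concentrates almost all of $\Cent(e)$ in the radical of the form, leaving only a narrow one-parameter family of candidate generators from which a normalization of $(\alpha,\beta)$ to $(1,1)$ should be readable. In the quaternionic case, the nondegenerate rank-$4$ restriction provides enough anisotropic relations among centralizer elements to force $\cO_{\alpha,\beta}$ into the split isomorphism class under the characteristic-$3$ classification criterion of~\cite{Eld97}.

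The main obstacle is converting the qualitative rank data on $\norm$ restricted to $\Cent(e)$ into precise algebraic identities on $(\alpha,\beta)$. This will require the fine isomorphism criterion for $\cO_{\alpha,\beta}$ from~\cite{Eld97}, combined with careful bookkeeping of how the idempotent interacts with the symmetric composition. I expect (iii)$\Rightarrow$(i) to be the more delicate case, since the larger centralizer distributes the relevant data over more basis vectors and demands a more considered choice of witnesses, although conversely its extra dimensions leave more maneuvering room for the construction to succeed.
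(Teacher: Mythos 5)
The paper does not actually prove this statement---it is imported verbatim from \cite[Corollary 6.5]{Canadian}---so your attempt can only be judged on its own merits, and it has a concrete gap right at its one explicit step. Your candidate $e=\tilde z_{1,0}+\tilde z_{2,0}$ is indeed an idempotent of $\cO_{1,1}$, but in characteristic $3$ it is \emph{quadratic}, not singular or quaternionic. Working from Figure \ref{fig:table_ab} with $\alpha=\beta=1$ one finds
$\Cent(e)=\lspan{\tilde z_{1,0},\,\tilde z_{2,0},\,u,\,v}$ with $u=\tilde z_{0,1}+\tilde z_{1,1}+\tilde z_{2,1}$ and $v=\tilde z_{0,2}+\tilde z_{1,2}+\tilde z_{2,2}$; since each $\tilde z_{i,j}$ is isotropic, no two summands of $u$ (or of $v$) pair nontrivially, $\norm(u,\tilde z_{i,0})=0=\norm(v,\tilde z_{i,0})$, and $\norm(u,v)=1+1+1=3=0$ in characteristic $3$, the vectors $u,v$ lie in the radical of $\norm|_{\Cent(e)}$ and the rank is $2$. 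Moreover, ``looking in another block of the grading'' cannot repair this: the Weyl group of $\Gamma_{1,1}$ is $\SL(2,3)$, which acts transitively on the nonzero elements of $(\ZZ/3)^2$, so every idempotent supported on a single cyclic subgroup of the grading group is conjugate to this one under $\Aut(\Gamma_{1,1})$ and hence also quadratic. The singular and quaternionic idempotents whose existence constitutes implications (i)$\Rightarrow$(ii) and (i)$\Rightarrow$(iii) must be sought among elements supported on all of $\supp\Gamma_{1,1}$, and your proposal gives no mechanism for producing them.

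The converse implications are not argued at all beyond a statement of intent. The real content there is a nonexistence claim: for $\cO_{\alpha,\beta}$ not split (say $\alpha$ not a cube in $\FF$), one must show that \emph{no} singular and \emph{no} quaternionic idempotent exists, even though quadratic idempotents (and hence ``a nonzero idempotent'') may well exist---this is exactly why Theorem \ref{th:not3_split} fails in characteristic $3$ and has to be replaced by the finer Theorem \ref{th:3_split}. Phrases such as ``a narrow one-parameter family of candidate generators from which a normalization of $(\alpha,\beta)$ to $(1,1)$ should be readable'' do not engage with this: you would need to parametrize all idempotents of each type in $\cO_{\alpha,\beta}$ (or at least obstruct the two relevant types), and then invoke the isomorphism criterion of \cite{Eld97} with explicit identities in $\alpha,\beta$. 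None of that is present, so as it stands the proposal establishes neither direction of the equivalence.
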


\bigskip

\section{Okubo algebras over finite fields are split}\label{se:finite}

By Wedderburn's Little Theorem, there are no quaternion division algebras over a finite field, and hence the only four-dimensional Hurwitz algebra is, up to isomorphism, the algebra of $2\times 2$-matrices. In particular, its norm is isotropic. As any Cayley algebra contains quaternion subalgebras, it follows that its norm is isotropic too, and again  the only Hurwitz (or para-Hurwitz) algebras of dimension $4$ or $8$ over a finite field are the split ones.

However, the isometry class of the norm does not determine the isomorphism class of Okubo algebras. Even so, the only Okubo algebra over a finite field is the split one. To prove this, we need to recall the following well-known result:

\begin{lemma}\label{le:hermitian}
Let $\FF$ be a finite field, $\KK/\FF$ a quadratic field extension 
($[\KK:\FF]=2$), $\tau$ the nontrivial $\FF$-automorphism of $\KK$, $U$ a finite-dimensional $\KK$-vector space, and 
$h:U\times U\rightarrow \KK$ a nondegenerate hermitian form: 
$h$ is $\FF$-bilinear, $h(\alpha u,v)=\alpha h(u,v)$, and $h(v,u)=\tau\bigl(h(u,v)\bigr)$ for any $u,v\in U$ and $\alpha\in \KK$.

Then there is a $\KK$-basis $\{u_1,\ldots,u_n\}$ in $U$ with $h(u_i,u_i)=1$ and $h(u_i,u_j)=0$ for $1\leq i\neq j\leq n$.
\end{lemma}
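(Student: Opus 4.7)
The plan is to argue by induction on $n=\dim_{\KK} U$, reducing at each step to an orthogonal complement. The nontrivial content lies entirely in constructing a single anisotropic unit vector; once that is done, the induction runs automatically.

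First I would establish the existence of some $u\in U$ with $h(u,u)\neq 0$. Note that $h(u,u)=\tau\bigl(h(u,u)\bigr)$, so $h(u,u)\in\FF$ always. Suppose for contradiction that $h(u,u)=0$ for every $u\in U$. Nondegeneracy of $h$ provides vectors $u,v\in U$ with $c\bydef h(u,v)\neq 0$. For any $\alpha\in\KK$,
\[
0=h(\alpha u+v,\alpha u+v)=\alpha h(u,v)+\tau(\alpha)h(v,u)=\alpha c+\tau(\alpha c),
\]
so the trace $\tr_{\KK/\FF}$ vanishes on the $\KK$-subspace $c\KK=\KK$. But $\KK/\FF$ is separable (every extension of a finite field is), hence $\tr_{\KK/\FF}\colon\KK\to\FF$ is surjective, a contradiction. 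Thus some $u$ satisfies $h(u,u)=\gamma\in\FF^\times$.

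Next I would rescale $u$ to produce a unit vector. For $\lambda\in\KK^\times$, $h(\lambda u,\lambda u)=\lambda\tau(\lambda)\gamma=N_{\KK/\FF}(\lambda)\gamma$. The norm map $N_{\KK/\FF}\colon\KK^\times\to\FF^\times$ is surjective for a finite quadratic extension: writing $|\FF|=q$, we have $N_{\KK/\FF}(\alpha)=\alpha^{q+1}$, whose kernel has size at most $q+1$, so its image has size at least $(q^{2}-1)/(q+1)=q-1=|\FF^\times|$. Hence we may choose $\lambda$ with $N_{\KK/\FF}(\lambda)=\gamma^{-1}$ and set $u_{1}\bydef\lambda u$, obtaining $h(u_{1},u_{1})=1$.

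Finally, let $W\bydef u_{1}^{\perp}=\{v\in U: h(u_{1},v)=0\}$. Since $h(u_{1},u_{1})=1\neq 0$, the linear functional $h(u_{1},\,\cdot\,)\colon U\to\KK$ is nonzero, so $W$ is a $\KK$-hyperplane with $U=\KK u_{1}\oplus W$, and the restriction $h|_{W\times W}$ is again hermitian and nondegenerate (any $w\in W$ with $h(w,W)=0$ would, together with $h(w,u_{1})=0$, give $h(w,U)=0$, contradicting nondegeneracy on $U$). The inductive hypothesis applied to $W$ yields an orthonormal $\KK$-basis $\{u_{2},\dots,u_{n}\}$ of $W$, and $\{u_{1},u_{2},\dots,u_{n}\}$ is the desired basis. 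The main obstacle is the first step; everything else is formal. Both key ingredients there---surjectivity of the trace and surjectivity of the norm for $\KK/\FF$---are particular to the finite-field hypothesis and have no analogue over an arbitrary ground field.
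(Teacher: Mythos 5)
Your proof is correct and follows essentially the same route as the paper's: show the form is not totally isotropic by polarizing $h(\alpha u+v,\alpha u+v)=0$, rescale an anisotropic vector using the surjectivity of the norm $\lambda\mapsto\lambda^{q+1}$ onto $\FF^\times$, and recurse on the orthogonal complement. The only cosmetic difference is in the first step, where the paper derives the contradiction $\tau(\alpha)=-\alpha$ for all $\alpha$ while you phrase it as the vanishing of the (surjective) trace map $\tr_{\KK/\FF}$; both are immediate from the same identity.
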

\begin{proof} 
This is well known.   If $\FF$ is the field of $q$ elements, then 
$\tau(\alpha)=\alpha^q$ for any $\alpha$. Let $\mu$ be a generator 
of the cyclic group $\KK^\times$. Hence $\FF^\times$ is generated by 
$\mu^{q+1}$, so for any $\alpha\in\FF$ there is an element 
$\beta\in\KK$ such that $\alpha=\beta^{q+1}$.

It cannot be the case that $h(u,u)$ equals $0$ for all $u$ because this would imply 
$h(u,v)=-h(v,u)=-\tau\left(h(u,v)\right)$, and this would give 
$\tau(\alpha)=-\alpha$ for any $\alpha\in\KK$, a contradiction, so let us take an element $u\in U$ with $h(u,u)=\alpha\neq 0$. Pick 
$\beta\in\KK$ with $\beta^{q+1}=\alpha$, then:
\[
h\bigl(\beta^{-1}u,\beta^{-1}u\bigr)
=\beta^{-1}\bigl(\beta^{-1}\bigr)^q h(u,u)
=\alpha^{-1}h(u,u)=1.
\]
Now the argument can be repeated with the restriction 
$h\vert_{(\KK u)^\perp}$.
\end{proof}

\begin{theorem}\label{th:finite}
Let $\cO$ be an Okubo algebra over a finite field $\FF$. Then $\cO$ is split.
\end{theorem}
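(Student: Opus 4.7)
The plan is to split into cases according to the classification theorems of Section \ref{se:Okubo} and, in each case, to show that the invariants classifying Okubo algebras over $\FF$ admit only one value when $\FF$ is finite.

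First suppose $\chr\FF=3$. Since finite fields are perfect, the remark following Theorem \ref{th:classification_3} already asserts that the only Okubo algebra over $\FF$, up to isomorphism, is the split one. (Alternatively, one checks directly, using Theorem \ref{th:classification_3}, that all the algebras $\cO_{\alpha,\beta}$ with $\alpha,\beta\in\FF^\times$ are mutually isomorphic, since every element of $\FF^\times$ is a cube in $\FF$ whenever $\chr\FF=3$ and $\FF$ is finite.)

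Now assume $\chr\FF\neq 3$, and let $\omega$ be a primitive cubic root of $1$ in an algebraic closure of $\FF$. If $\omega\in\FF$, Theorem \ref{th:classification_not3}(i) reduces the classification to that of central simple associative $\FF$-algebras $\cA$ of degree $3$. By Wedderburn's Little Theorem there are no finite division rings other than fields, so no central division algebras of degree $3$ exist over $\FF$, forcing $\cA\cong\Mat_3(\FF)$. The corresponding Okubo algebra is then the split one.

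Finally, suppose $\omega\notin\FF$ and set $\KK=\FF[\omega]$, a quadratic extension of $\FF$. By Theorem \ref{th:classification_not3}(ii), Okubo algebras over $\FF$ correspond bijectively (up to isomorphism) to pairs $(\cA,\tau)$ where $\cA$ is a central simple $\KK$-algebra of degree $3$ and $\tau$ is a $\KK/\FF$-involution of the second kind. Again by Wedderburn, $\cA\cong\Mat_3(\KK)$. Any involution of the second kind on $\Mat_3(\KK)$ is the adjoint involution of a nondegenerate hermitian form $h$ on $\KK^3$ (relative to the nontrivial Galois automorphism of $\KK/\FF$), and two such involutions are isomorphic if and only if the underlying hermitian forms are similar. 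By Lemma \ref{le:hermitian} every nondegenerate hermitian form on $\KK^3$ is diagonalizable with all diagonal entries equal to $1$, hence all such $h$ are equivalent, so $(\cA,\tau)$ is unique up to isomorphism. The corresponding Okubo algebra must therefore be the split one. The main obstacle is precisely this uniqueness of the hermitian form, which is neatly resolved by Lemma \ref{le:hermitian}.
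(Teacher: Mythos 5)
Your proof is correct and follows essentially the same route as the paper: the characteristic $3$ case via perfectness of finite fields, and the characteristic $\neq 3$ cases via Wedderburn's Little Theorem together with the uniqueness of the nondegenerate hermitian form from Lemma \ref{le:hermitian}. The only cosmetic difference is your added aside on cubes in characteristic $3$, which is just a restatement of perfectness.
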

\begin{proof}
If the characteristic is $3$, this follows from the isomorphism conditions in \cite{Eld97}, which shows in particular that the only Okubo algebra over a perfect field is the split one.

If the characteristic of $\FF$ is not $3$ and $\FF$ contains a primitive cubic root of $1$, then by Wedderburn's Little Theorem the only central simple associative algebra over $\FF$ is $\Mat_3(\FF)$, and hence Theorem \ref{th:classification_not3} shows that there is, up to isomorphism, a unique Okubo algebra over $\FF$.

If the characteristic of $\FF$ is not $3$ and $\FF$ does not contain a primitive cubic root of $1$, let $\KK=\FF[\omega]$ be the field obtained by adjoining one such root to $\FF$. Again, the only central simple associative $\KK$-algebra is $\Mat_3(\KK)$, and any 
$\KK/\FF$-involution of the second kind in $\Mat_3(\KK)$ is given by the adjoint relative to a nondegenerate $\KK/\FF$-hermitian form on 
$\KK^3$. Since there is only one such form, up to isometry, by Lemma \ref{le:hermitian}, the result follows.
\end{proof}

\bigskip

\section{Special gradings on Okubo algebras with\\ isotropic norm}\label{se:special}

Let us recall the definition of equivalent and isomorphic gradings (see \cite{EKmon}).

\begin{definition}\label{df:equiv_isom}
Let $\Gamma:\cA=\bigoplus_{g\in G}\cA_g$ and $\Gamma':\cB=\bigoplus_{h\in H}\cB_h$ be two gradings by the groups 
$G$ and $H$ on
the algebras $\cA$ and $\cB$.
\begin{enumerate}
\item $\Gamma$ and $\Gamma'$ are said to be \emph{equivalent} if there is an algebra isomorphism $\varphi:\cA\rightarrow \cB$ such that for any $g\in G$ in the support $\supp\Gamma$ (this is the set of elements $g\in G$ such that $\cA_g\neq 0$), there is an element $h\in H$ such that $\varphi(\cA_g)=\cB_h$.

In this case we will write $(\cA,\Gamma)\simeq_{\textup{eq}}(\cB,\Gamma')$.

\item $\Gamma$ and $\Gamma'$ are said to be \emph{isomorphic} if
$G=H$ and there is an algebra isomorphism $\varphi:\cA\rightarrow \cB$ such that $\varphi(\cA_g)=\cB_g$ for all $g\in G$.

\item The group $\Aut(\Gamma)$ is the group of autoequivalences of 
$\Gamma$ (i.e. automorphisms $\varphi$ of 
$\cA$ that permute the homogeneous components of $\Gamma$). Its subgroup 
$\Stab(\Gamma)$ is the group of isomorphisms of $\Gamma$ (i.e., autoequivalences that fix each homogeneous component). The quotient $W(\Gamma)=\Aut(\Gamma)/\Stab(\Gamma)$ is called the \emph{Weyl group} of $\Gamma$.
\end{enumerate}
\end{definition}

It must be remarked that, in dealing with gradings by groups on symmetric composition algebras, it is enough to consider gradings by abelian groups 
(see \cite[Proposition 4.49]{EKmon}).

Following the notation used by Hesselink for Lie algebras
 \cite{Hesselink}, a grading by an abelian group $\Gamma:\cA=\bigoplus_{g\in G}\cA_g$ on an algebra $\cA$ will be called \emph{special} if the homogeneous component corresponding to the neutral element is trivial: $\cA_e=0$.

Okubo algebras with isotropic norm are characterized by the existence of special gradings on them.

\begin{theorem}\label{th:isotropic}
Let $(\cO,*,\norm)$ be an Okubo algebra over a field $\FF$. The following conditions are equivalent:
\begin{romanenumerate}
\item The norm $\norm$ is isotropic.
\item There are scalars $\alpha,\beta\in\FF^\times$ such that $\cO$ is isomorphic to $\cO_{\alpha,\beta}$.
\item $\cO$ admits a special grading by an abelian group.
\end{romanenumerate}
\end{theorem}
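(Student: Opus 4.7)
The plan is to prove the cycle (ii) $\Rightarrow$ (iii) $\Rightarrow$ (i) $\Rightarrow$ (ii), along with the direct implication (ii) $\Rightarrow$ (i) for transparency. The first two are immediate from the construction of $\cO_{\alpha,\beta}$: the $(\ZZ/3)^2$-grading with one-dimensional homogeneous components $\FF\tilde z_{i,j}$ supported on $(\ZZ/3)^2\setminus\{(0,0)\}$ is manifestly special, and Figure \ref{fig:splitOkubo} exhibits many vanishing products between nonzero basis vectors (for instance $\tilde z_{1,0}*\tilde z_{2,0}=0$); by multiplicativity of $\norm$ at least one---in fact every---$\tilde z_{i,j}$ is isotropic.

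For (iii) $\Rightarrow$ (i), given a special abelian grading $\Gamma\colon \cO=\bigoplus_{g\in G}\cO_g$, I would first show the polar form respects the grading. Linearizing $(x*y)*x=\norm(x)y$ in $x$ gives
\[
(x*y)*x'+(x'*y)*x=\norm(x,x')\,y,
\]
and for homogeneous $x\in\cO_g$, $x'\in\cO_{g'}$, $y\in\cO_h$ the left-hand side has degree $g+g'+h$ while the right-hand side has degree $h$; picking any nonzero $y$ then forces $\norm(\cO_g,\cO_{g'})=0$ whenever $g+g'\neq e$. Nonsingularity of $\norm$ then implies $-g\in\supp\Gamma$ for every $g\in\supp\Gamma$ (otherwise $\cO_g$ would lie in the radical, which is at most one-dimensional and cannot accommodate the $8$-dimensional $\cO$). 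Choosing nonzero $x\in\cO_g$ and $y\in\cO_{-g}$ (with $y=x$ when $g=-g$), the product $x*y$ lies in $\cO_e=0$, and multiplicativity of $\norm$ gives $\norm(x)\norm(y)=0$, exhibiting an isotropic element.

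For (i) $\Rightarrow$ (ii) I would proceed by cases. In characteristic $3$ this is exactly Theorem \ref{th:classification_3}, and when $\charac\FF\neq 3$ with $\omega\in\FF$ it follows from Remark \ref{re:symbol}, since every central simple $\FF$-algebra of degree $3$ is then a symbol algebra. The substantive case is $\charac\FF\neq 3$ with $\omega\notin\FF$: set $\KK=\FF[\omega]$ and write $\cO=\Skew(\cA_0,\tau)$ as in Theorem \ref{th:classification_not3}(ii). For $\FF$ finite, Theorem \ref{th:finite} settles the matter; otherwise a Zariski-density argument on the quadric $\{\sr=0\}\subset\cO$ yields a nonzero $x$ with $\sr(x)=0$ and $\det(x)\neq 0$. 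Since $x\in\cA_0$ with $\sr(x)=0$, its characteristic polynomial is $X^3-\det(x)$, so $x^3=\det(x)\in\KK^-$; a suitable $\KK$-scalar rescaling yields $u\in\Sym(\cA_0,\tau)$ with $u^3=\alpha\in\FF^\times$. Now $\KK[u]$ is a cubic cyclic extension of $\KK$ on which $\sigma(u)=\omega u$ generates the Galois group, $\cA$ is the corresponding cyclic algebra, and there exists $v\in\cA$ with $vu=\omega uv$ and $v^3\in\KK^\times$. A Hilbert~90-type argument, exploiting that $\tau$ preserves the $\ZZ/3$-grading of $\cA$ by $\ad u$-eigenspaces, permits a rescaling of $v$ so that $\tau(v)=v$ and $v^3=\beta\in\FF^\times$, realizing $\cA\cong(\alpha,\beta)_{\KK,\omega}$ with its standard involution and therefore $\cO\cong\cO_{\alpha,\beta}$.

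The principal obstacle I foresee is this last rescaling of $v$: the simultaneous constraints $\tau(v)=v$ and $v^3\in\FF^\times$ are coupled by the descent data of the cyclic algebra (the compatibility relation $N_{\KK[u]/\KK}(w)=\tau(\beta')/\beta'$ for $w$ defined by $\tau(v_0)=v_0w$), and reconciling them cleanly demands a careful norm computation together with a Hilbert~90 vanishing for the induced $\ZZ/2$-action $\sigma\circ\tau$ on $\KK[u]^\times$.
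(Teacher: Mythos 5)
Your implications (ii)$\Rightarrow$(iii) and (iii)$\Rightarrow$(i) are correct, and the latter takes a genuinely different and more elementary route than the paper, which instead proves (iii)$\Rightarrow$(ii) by invoking the classification of special gradings in \cite[Theorem 4.4]{GradingSym}. Your linearization of $(x*y)*x=\norm(x)y$ does yield $\norm(\cO_g,\cO_{g'})=0$ for $gg'\neq e$, and then $\cO_g*\cO_{g^{-1}}\subseteq\cO_e=0$ produces an isotropic vector; this is a nice self-contained argument. The one point to repair is your parenthetical justification that $-g\in\supp\Gamma$: $\cO_g$ is of course not $8$-dimensional, and a priori a one-dimensional $\cO_g$ with anisotropic norm could sit inside a one-dimensional radical, which nonsingularity permits. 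The correct observation is that this degenerate case occurs only in characteristic $2$, where the polar form is alternating and hence has even corank on the $8$-dimensional space $\cO$; so nonsingularity forces $\cO^\perp=0$ and the claim follows.

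The genuine gap is in (i)$\Rightarrow$(ii) when $\charac\FF\neq 3$ and $\omega\notin\FF$, the only case in which hypothesis (i) is actually used (in the other two cases every Okubo algebra is some $\cO_{\alpha,\beta}$, as you note). The paper disposes of this case by citing \cite[Theorem 4]{Twisted}; you attempt to reprove it and stop exactly where you say you do. After producing a $\tau$-symmetric $u$ with $u^3=\alpha\in\FF^\times$ and some $v_0$ with $v_0uv_0^{-1}=\omega u$ and $v_0^3\in\KK^\times$, you must replace $v_0$ by $v=v_0\lambda$ with $\lambda\in\KK[u]^\times$ so that simultaneously $\tau(v)=v$ and $v^3\in\FF^\times$, and this reconciliation is asserted rather than proved. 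For the record, the step does close: writing $\tau(v_0)=v_0w$ with $w\in\KK[u]^\times$ and setting $\rho=\sigma^{-1}\circ\tau|_{\KK[u]}$ (an involution of $\KK[u]$, since $\tau\sigma\tau=\sigma^{-1}$), the relation $\tau^2=\id$ gives $w\,\rho(w)=1$, so Hilbert 90 for $\KK[u]/\KK[u]^{\rho}$ yields $\lambda$ with $w=\lambda\rho(\lambda)^{-1}$; then $\tau(v)=v$ and $v^3=v_0^3\,N_{\KK[u]/\KK}(\lambda)$ lies in $\KK^\times$ and is $\tau$-fixed, hence lies in $\FF^\times$. Two smaller omissions in the same case: you assume $\KK[u]$ is a cubic field extension of $\KK$ (if $\alpha\in(\KK^\times)^3$ the algebra is split and the argument must be adjusted), and the Zariski-density step needs the remark that an isotropic nondegenerate quadric over an infinite field has Zariski-dense rational points. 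Since the paper itself treats (i)$\Leftrightarrow$(ii) as a known classification result, the economical fix is simply to cite \cite[Theorem 4]{Twisted} together with Remark \ref{re:symbol}, as the paper does.
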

\begin{proof}
The equivalence of (i) and (ii) follows from \cite[Theorem 4]{Twisted} and Remark \ref{re:symbol} in case the characteristic of $\FF$ is not $3$, and from Theorem \ref{th:classification_3} in characteristic $3$.

It is clear that $\cO_{\alpha,\beta}$ is $\left(\ZZ/3\right)^2$-graded, with 
\begin{equation}\label{eq:Gab}
\deg(\tilde z_{i,j})=(i,j)\ \text{(modulo $3$).}
\end{equation}
This is a special grading, so (ii) implies (iii).

Conversely, according to \cite[Theorem 4.4]{GradingSym}, the only special gradings on Okubo algebras are, up to equivalence, the above 
$\left(\ZZ/3\right)^2$-gradings on the Okubo algebras 
$\cO_{\alpha,\beta}$.
\end{proof} 

We will denote by $\Gamma_{\alpha,\beta}$ the 
$\left(\ZZ/3\right)^2$-grading on 
$\cO_{\alpha,\beta}$ defined in \eqref{eq:Gab}.

\begin{corollary}\label{co:special}
Any special grading $\Gamma$ on an Okubo algebra $\cO$ is equivalent to the canonical grading $\Gamma_{\alpha,\beta}$ on 
$\cO_{\alpha,\beta}$ for some $0\neq \alpha,\beta\in\FF$:
\[
\bigl(\cO,\Gamma\bigr)\simeq_{\textup{eq}} 
\bigl(\cO_{\alpha,\beta},\Gamma_{\alpha,\beta}\bigr).
\]
\end{corollary}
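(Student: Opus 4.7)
The corollary is essentially a direct restatement of the implication already used in proving Theorem \ref{th:isotropic}. My plan is to observe that the step (iii) $\Rightarrow$ (ii) in the proof of that theorem, obtained by appealing to \cite[Theorem 4.4]{GradingSym}, in fact produces more than an abstract isomorphism $\cO \cong \cO_{\alpha,\beta}$: it yields an equivalence of gradings $(\cO, \Gamma) \simeq_{\textup{eq}} (\cO_{\alpha,\beta}, \Gamma_{\alpha,\beta})$. The corollary is the explicit extraction of this stronger conclusion.

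The single auxiliary remark required is that we may assume the grading group of $\Gamma$ to be abelian, so that \cite[Theorem 4.4]{GradingSym} applies. This is supplied by \cite[Proposition 4.49]{EKmon}, already invoked in the paragraph preceding Theorem \ref{th:isotropic}, which asserts that any group grading on a symmetric composition algebra is equivalent to one by an abelian group. Once this normalization is made, the classification in \cite{GradingSym} produces an algebra isomorphism $\varphi\colon \cO \to \cO_{\alpha,\beta}$ carrying each homogeneous component of $\Gamma$ onto some homogeneous component of $\Gamma_{\alpha,\beta}$, which is exactly the required equivalence.

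The substantive content lies in \cite[Theorem 4.4]{GradingSym}. The strategy there runs roughly as follows: for a special grading $\Gamma$ on $\cO$, the identity $\norm(x*y,z) = \norm(x, y*z)$ together with $\cO_e = 0$ forces $\norm(\cO_g, \cO_h) = 0$ whenever $gh \neq e$, so that homogeneous components pair off into dual pairs $\cO_g, \cO_{g^{-1}}$ under the (nonsingular) norm. Combined with the multiplicative table recorded in Figure \ref{fig:splitOkubo}, this pins down the support as $(\ZZ/3)^2 \setminus \{(0,0)\}$ inside the universal abelian grading group $(\ZZ/3)^2$, forces each homogeneous component to be one-dimensional, and the main technical obstacle — treated there — is then extracting the scalars $\alpha, \beta \in \FF^\times$ from the structure constants of a homogeneous basis and constructing the explicit equivalence with $\Gamma_{\alpha,\beta}$.
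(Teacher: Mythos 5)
Your proposal is correct and matches the paper's approach: the paper gives no separate argument for Corollary~\ref{co:special}, deriving it directly from the appeal to \cite[Theorem 4.4]{GradingSym} in the proof of Theorem~\ref{th:isotropic}, exactly as you describe. Your added remark on reducing to abelian grading groups via \cite[Proposition 4.49]{EKmon} and your sketch of the cited classification are consistent with what the paper relies on.
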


The proof of the above Theorem is based on 
\cite[Theorem 4.4]{GradingSym}, which relies on the next technical result, 
that has its own independent interest.

\begin{lemma}[{\cite[Theorem 3.12 and Corollary 3.17]{GradingSym}}] \label{le:technical}
Let $(\cS,*,\norm)$ be an eight-dimensional symmetric composition algebra over a field $\FF$, and let $x,y\in\cS$ be elements such that
\[
\norm(x)=\norm(y)=0,\ 
\norm(x,x*x)=\alpha\neq 0\neq \beta=\norm(y,y*y),
\]
and
\[
\norm(\FF x+\FF x*x,\FF y+\FF y*y)=0.
\]
Then $(\cS,*,\norm)$ is an Okubo algebra and either $x*y=0$ or $y*x=0$, but not both.

Moreover, if $y*x=0$, then $(\cS,*,\norm)$ is isomorphic to $\cO_{\alpha,\beta}$ under an isomorphism that takes $x$ to $\tilde z_{1,0}$ and $y$ to $\tilde z_{0,1}$.
\end{lemma}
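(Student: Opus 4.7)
The plan is to build an explicit isomorphism $\cS \to \cO_{\alpha,\beta}$ sending $x \mapsto \tilde z_{1,0}$ and $y \mapsto \tilde z_{0,1}$. Setting $v = x*x$ and $v' = y*y$, the fundamental identity $(a*b)*a = a*(b*a) = \norm(a)b$ together with its linearizations $(a*b)*c + (c*b)*a = \norm(a,c)b$ and $a*(b*c) + c*(b*a) = \norm(a,c)b$, and the cyclicity $\norm(a*b,c) = \norm(b*c,a) = \norm(c*a,b)$, yield $v*x = x*v = y*v' = v'*y = 0$, $v*v = \alpha x$, $v'*v' = \beta y$, and (combined with the orthogonality hypothesis) that $x, v, y, v'$ are linearly independent. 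To rule out the para-Cayley alternative---every $8$-dimensional symmetric composition algebra being either para-Cayley or Okubo---observe that in the para-Cayley case $a*b = \bar a \cdot \bar b$ for some Cayley algebra with unit $1$, and Cayley--Hamilton gives $a*a = \norm(a,1)^2 \cdot 1 - \norm(a,1)\cdot a$ and $\norm(a,a*a) = \norm(a,1)^3$ for isotropic $a$; when this is nonzero, $1 \in \FF a + \FF(a*a)$. Applying this to both $x$ and $y$ puts $1 \in (\FF x + \FF v) \cap (\FF y + \FF v') = 0$, a contradiction; hence $\cS$ is Okubo.

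The dichotomy is then proved in two halves. For ``not both zero'', the linearization with $(a,b,c) = (x,y,v)$ gives $(x*y)*v + (v*y)*x = \alpha y$; computing $((y*x)*x)*x$ in two ways---via $(y*x)*x = -v*y$ (from the first linearization applied to $(y,x,x)$) and via the same linearization applied to $(y*x, x, x)$ using $\norm(y*x, x) = \norm(y, v) = 0$---yields $(v*y)*x = v*(y*x)$. Combining, $x*y = 0$ forces $v*(y*x) = \alpha y \neq 0$, so $y*x \neq 0$; the reverse direction is symmetric. For ``not both nonzero'', invoke the classification of Okubo algebras (\cite{EM93,Eld97} with Remark~\ref{re:symbol}) to identify $\cS \cong \cO_{\alpha,\beta'}$ via an isomorphism sending $x$ to $\tilde z_{1,0}$; then $y$ decomposes along the $(\ZZ/3)^2$-grading as $y = y_1 + y_2$ with $y_1$ in the $\ker L_x$-part and $y_2$ in the $\ker R_x$-part. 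The isotropy of $y$ and the orthogonality of $y*y$ to $\FF x + \FF v$ translate into three bilinear equations in the coordinates of $y_1, y_2$, which assemble into a matrix equation $A\vec y_1 = 0$ where $A$ is the multiplication matrix of an element $\xi_{y_2} \in \FF[\gamma]/(\gamma^3 - \alpha)$; a nontrivial solution forces the reduced norms $N(\xi_{y_1})$ and $N(\xi_{y_2})$ to vanish simultaneously, and then $\norm(y, y*y)$---which is, up to scalars, the sum of these two cubic norms---vanishes too, contradicting the hypothesis.

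Finally, when $y*x = 0$ (the case $x*y = 0$ being analogous), define
\[
e_{1,0} = x,\ e_{2,0} = v,\ e_{0,1} = y,\ e_{0,2} = v',\ e_{1,1} = -x*y,\ e_{2,2} = (x*y)*(x*y),\ e_{1,2} = -v'*x,\ e_{2,1} = -y*v.
\]
Linear independence of the $e_{i,j}$ follows from the orthogonality structure, and a systematic application of the identities of the first paragraph together with $y*x = 0$ verifies that they multiply according to the table of Figure~\ref{fig:splitOkubo}; this yields the isomorphism $\cS \to \cO_{\alpha,\beta}$. The main obstacle is the ``not both nonzero'' step above, which requires recognizing the cubic-norm structure of $\norm(y,y*y)$ inside the associated symbol algebra and extracting the vanishing from the bilinear constraints; the subsequent check of the $8\times 8$ multiplication table, though lengthy, is a routine case-by-case verification.
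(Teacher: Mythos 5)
First, a framing point: the paper does not prove this lemma at all --- it is imported wholesale from \cite[Theorem~3.12 and Corollary~3.17]{GradingSym} --- so your attempt can only be measured against the original source, which establishes everything by direct manipulation of the composition identities, building the basis and the multiplication table without passing through the associative-algebra models. Much of your proposal does check out: the exclusion of the para-Cayley case via $1\in(\FF x+\FF x*x)\cap(\FF y+\FF y*y)=0$ is correct and pleasantly short; the computation showing $x*y$ and $y*x$ cannot both vanish is correct; and I have verified in the model $\cO_{\alpha,\beta'}$ that the constraints $\norm(y)=0$, $\norm(y*y,x)=0$, $\norm(y*y,x*x)=0$ really do give three bilinear equations whose coefficient matrix has determinant $\pm N(\xi_{y_2})$, and that $\norm(y,y*y)$ splits (by a degree count in the grading) into the two cubic norms with no cross terms.

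The genuine gap is the first move of your ``not both nonzero'' step: the assertion that the classification yields an isomorphism $\cS\cong\cO_{\alpha,\beta'}$ \emph{carrying $x$ to $\tilde z_{1,0}$}. The classification theorems you invoke produce an isomorphism $\cS\cong\cO_{\alpha'',\beta''}$ for \emph{some} scalars, but say nothing about where a prescribed element goes; normalizing $x$ to $\tilde z_{1,0}$ is exactly the one-generator half of the statement being proved, so as written the step is close to circular. It can be repaired when $\charac\FF\neq 3$ and $\omega\in\FF$: there Cayley--Hamilton gives $x^3=\det(x)1$ with $\det(x)\neq 0$ in the associative model, $\FF[x]$ is a maximal \'etale subalgebra, and Skolem--Noether extends the isomorphism $\FF[x]\cong\FF[\tilde z_{1,0}]$ to the whole algebra; when $\omega\notin\FF$ one needs the involution-compatible variant of this. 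But in characteristic $3$ the subalgebra $\FF[\gamma]/(\gamma^3-\alpha)$ is not \'etale and Okubo algebras are not of the form $\cA_0$, so this route collapses entirely and a direct argument from the identities (as in \cite{GradingSym}) is unavoidable; your proposal does not acknowledge this case. A secondary, lesser issue is that the ``moreover'' clause --- the $8\times 8$ table check for the elements $e_{i,j}$ --- is deferred as routine; it is indeed mechanical (and your definitions of the $e_{i,j}$ are consistent with Figure~\ref{fig:splitOkubo} on every entry I spot-checked), but it is where the bulk of the original proof's work actually lies, so the proposal as it stands is an outline rather than a proof.
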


In the above Lemma, if $x*y=0$, interchanging the roles of $x$ and $y$, it follows that $(\cS,*,\norm)$ is isomorphic to $\cO_{\beta,\alpha}$ under an isomorphism that takes $y$ to $\tilde z_{1,0}$ and $x$ to $\tilde z_{0,1}$.

\medskip

Let $\Gamma:\cO=\bigoplus_{g\in G}\cO_g$ be a special grading
on an Okubo algebra. Denote by $H$ the subgroup of $G$ generated by the support $\supp\Gamma=\{g\in G\mid \cO_g\neq 0\}$. Consider the following map:
\[
\begin{split}
\Phi_\Gamma: H&\longrightarrow \FF^\times/(\FF^\times)^3\\
 h&\mapsto \begin{cases} [1]&\text{if $h=e$,}\\
          [\norm(u,u*u)]&\text{for $0\neq u\in\cO_h$, otherwise.}
\end{cases}
\end{split}
\]
(Here $[\alpha]\bydef (\alpha\FF^\times)^3$ for all $\alpha\in\FF^\times$.)

\begin{lemma}\label{le:Phi_Gamma}
The above map $\Phi_\Gamma$ is a well defined group homomorphism.
\end{lemma}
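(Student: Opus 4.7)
The plan is to reduce the lemma to an explicit computation in the canonical grading $\Gamma_{\alpha,\beta}$ via Corollary \ref{co:special}.

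First, I would invoke Corollary \ref{co:special} to obtain $\alpha,\beta\in\FF^\times$ and an algebra isomorphism $\varphi:\cO\to\cO_{\alpha,\beta}$ sending each $\cO_g$ onto a line $\FF\tilde z_{i(g),j(g)}$, so that $g\mapsto(i(g),j(g))$ is a bijection between $\supp\Gamma$ and $(\ZZ/3)^2\setminus\{(0,0)\}$. This settles well-definedness at a stroke: every component $\cO_h$ is one-dimensional, so two nonzero $u,u'\in\cO_h$ differ by a scalar $\lambda\in\FF^\times$ which scales $\norm(u,u*u)$ by $\lambda^3$ and hence fixes its class in $\FF^\times/(\FF^\times)^3$. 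Algebra isomorphisms of symmetric composition algebras preserve the norm (as follows from $(x*y)*x=\norm(x)y$), so this class equals the one computed on $\tilde z_{i(h),j(h)}$, and a direct inspection of Figure \ref{fig:splitOkubo} gives $\norm(\tilde z_{i,j},\tilde z_{i,j}*\tilde z_{i,j})=\alpha^i\beta^j$ with $i,j\in\{0,1,2\}$, which in particular is nonzero.

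For the group homomorphism property, the previous computation reads $\Phi_{\Gamma_{\alpha,\beta}}((i,j))=[\alpha^i\beta^j]$, manifestly a group homomorphism $(\ZZ/3)^2\to\FF^\times/(\FF^\times)^3$. To transfer this to $\Phi_\Gamma$ I would promote the bijection $\pi:g\mapsto(i(g),j(g))$ to a group isomorphism $\tilde\pi:H\to(\ZZ/3)^2$. Since $\varphi$ is an algebra isomorphism, each nonzero product $u*v$ with $u\in\cO_{g_1}$ and $v\in\cO_{g_2}$ yields the relation $\pi(g_1g_2)=\pi(g_1)+\pi(g_2)$, and the relations coming from products among the preimages of $\tilde z_{1,0}$ and $\tilde z_{0,1}$ already suffice to present $(\ZZ/3)^2$. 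The identity $\Phi_\Gamma=\Phi_{\Gamma_{\alpha,\beta}}\circ\tilde\pi$ then exhibits $\Phi_\Gamma$ as a composition of group homomorphisms.

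The main delicate point will be justifying that $\tilde\pi$ is genuinely an isomorphism rather than merely a surjection $H\twoheadrightarrow(\ZZ/3)^2$: this is the standard universal-grading-group argument, combined with the observation that collapsing two distinct labels of $(\ZZ/3)^2$ onto the same element of $G$ would force some $\cO_h$ to be at least two-dimensional, contradicting one-dimensionality of the components guaranteed by the equivalence to $\Gamma_{\alpha,\beta}$.
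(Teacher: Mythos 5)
Your argument is correct in substance, but it takes a genuinely different route from the paper's. The paper proves the lemma intrinsically: it quotes \cite[Theorem 4.4]{GradingSym} only for the facts that $H=\{e\}\cup\supp\Gamma$ is isomorphic to $(\ZZ/3)^2$ and that $\norm(\cO_g,\cO_h)=0$ unless $gh=e$, and then verifies $\Phi_\Gamma(h^2)=\Phi_\Gamma(h)^2$ and $\Phi_\Gamma(gh)=\Phi_\Gamma(g)\Phi_\Gamma(h)$ by direct manipulation of \eqref{eq:xyx} together with Lemma \ref{le:technical}; the identity \eqref{eq:nyx} obtained along the way is reused later in the proof of Theorem \ref{th:special}. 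You instead transport everything to the model $\bigl(\cO_{\alpha,\beta},\Gamma_{\alpha,\beta}\bigr)$ via Corollary \ref{co:special} and read off $\Phi_{\Gamma_{\alpha,\beta}}\bigl((i,j)\bigr)=[\alpha^i\beta^j]$ from Figure \ref{fig:splitOkubo}; this is legitimate (no circularity, since Corollary \ref{co:special} rests on \cite[Theorem 4.4]{GradingSym} and not on this lemma), and it handles well-definedness very cleanly. The one point to tighten is your claim that the product relations among the preimages $a,b$ of $(1,0)$ and $(0,1)$ already present $(\ZZ/3)^2$: the components mapped to $(1,0)$ and $(2,0)$ multiply to zero in both orders (row and column $\tilde z_{1,0}$, $\tilde z_{2,0}$ of the table), so the relation $a^3=e$ is not witnessed by any single nonzero product; it follows instead from additivity of $\pi$ on $a*a$ and on $(a*a)*(a*a)$, which gives $\pi$-degree $a^4=a$ and hence $a^3=e$ by cancellation in $G$, while $ab=ba$ comes from the standing reduction to abelian grading groups (\cite[Proposition 4.49]{EKmon}) rather than from the table. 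With that filled in, $\tilde\pi$ is an isomorphism and $\Phi_\Gamma=\Phi_{\Gamma_{\alpha,\beta}}\circ\tilde\pi$ is a homomorphism, as you say. Your approach buys a short proof and an explicit closed formula for $\Phi_\Gamma$; the paper's buys independence from the explicit multiplication table and the auxiliary formula \eqref{eq:nyx} that is needed again afterwards.
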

\begin{proof}
By \cite[Theorem 4.4]{GradingSym}, $H=\{e\}\cup \supp\Gamma$ (disjoint union) is isomorphic to $\left(\ZZ/3\right)^2$. Besides, for any $e\neq h\in H$ and $0\neq u\in\cO_h$, $0\neq u*u\in\cO_{h^2}$ and 
$\norm(u,u*u)\neq 0$ because 
$\norm(\cO_g,\cO_h)=0$ unless $gh=e$ (and $\norm(\cO_g)=0$ unless $g^2=e$, \cite[\S 4]{GradingSym}).
Therefore, $\Phi_\Gamma$ is well defined.

For $0\neq u\in\cO_h$, \eqref{eq:xyx} gives
\[
\begin{split}
(u*u)*(u*u)&=-\bigl((u*u)*u\bigr)*u+\norm(u,u*u)u\\
 &=-\norm(u)u+\norm(u,u*u)u\\
&=\norm(u,u*u)u,
\end{split}
\]
as $\norm(\cO_h)=0$. Hence we get
\[
\norm\bigl(u*u,(u*u)*(u*u)\bigr)=\norm(u,u*u)^2
\]
so that $\Phi_\Gamma(h^2)=\Phi_\Gamma(h)^2$.

Take now $e\neq h,g\in H$ with $g\neq h,h^2$, and pick nonzero elements $x\in\cO_h$ and $y\in\cO_g$. By 
Lemma \ref{le:technical}, either $x*y=0$ or $y*x=0$, but not both.

If $x*y=0$ (the other case is similar), then using repeatedly \eqref{eq:xyx} and the fact that two homogeneous spaces are orthogonal relative to $\norm$ unless the product of their degrees is $e$, we get $(y*y)*x=-(x*y)*y=0$ and
$(y*x)*(y*x)=-x*(y*(y*x))=x*(x*(y*y))=-(y*y)*(x*x)$, so that
\begin{multline}\label{eq:nyx}
\norm\bigl(y*x,(y*x)*(y*x)\bigr)=-\norm\bigl(y*x,(y*y)*(x*x)\bigr)\\
=-\norm\bigl((y*x)*(y*y),x*x\bigr)=-\norm(y,y*y)\norm(x,x*x).
\end{multline}
This shows $\Phi_\Gamma(gh)=\Phi_\Gamma(g)\Phi_\Gamma(h)$. 
\end{proof}

Note that the image of the homomorphism $\Phi_\Gamma$ is a
$3$-elementary abelian group of rank $\leq 2$.

Denote by $\mu_3(\FF)$ the group of cubic roots of $1$ in the field 
$\FF$. This is either trivial or cyclic of order $3$ (isomorphic to 
$\ZZ/3$).

\medskip

Among the finite classical groups it is well known that the projective 
special linear groups $\PSL(n,p^m)$ (for a prime number $p$) are all 
simple with the exceptions of $\PSL(2,2)$ and $\PSL(2,3)$.
 Their action on the projective line show easily that $\PSL(2,2)$ is, up to isomorphism, 
 the symmetric group of degree $3$
  and $\PSL(2,3)$ is the alternating group of degree $4$. Besides, the projective special unitary groups
$\PSU(n,p^{2m})$ are simple with the following exceptions: $\PSU(2,2^2)$,
$\PSU(2,3^2)$, and $\PSU(3,2^2)$. Moreover, $\PSU(2,2^2)$ is 
 the symmetric group of degree $3$ and $\PSU(2,3^2)$ is the alternating group
of order $4$. (See e.g. \cite[Chapter 11]{Grove}.)
 The structure of the remaining case, 
$\PSU(3,2^2)$, turns out to be related to Okubo algebras and their special
gradings.

\begin{theorem}\label{th:special}
Let $\Gamma$ be a special grading on an Okubo algebra $\cO$ over the field $\FF$. Then the short exact sequence
\begin{equation}\label{eq:short}
1\longrightarrow \Stab(\Gamma)\longrightarrow \Aut(\Gamma)
\longrightarrow W(\Gamma)\longrightarrow 1
\end{equation}
splits. 
Moreover, $\Stab(\Gamma)=\mu_3(\FF)^2$, which is trivial if the 
characteristic of $\FF$ is $3$ or if  $\charac\FF\neq 3$  
but $\FF$ does not contain a primitive cubic root of $1$, and it is 
isomorphic to $\left(\ZZ/3\right)^2$ if $\charac\FF\neq 3$  
and $\FF$ contains a primitive cubic root of $1$. The following 
possibilities appear: 
\begin{itemize}
\item The image of $\Phi_\Gamma$ is trivial. In this case $\cO$ is the split Okubo algebra, $(\cO,\Gamma)\simeq_{\textup{eq}}\bigl(\cO_{1,1},\Gamma_{1,1}\bigr)$, and $W(\Gamma)$ is isomorphic to the special linear group $\SL(2,3)$.

\item The image of $\Phi_\Gamma$ is cyclic of order $3$. In this case 
$\bigl(\cO,\Gamma\bigr)\simeq_{\textup{eq}}
\bigl(\cO_{1,\alpha},\Gamma_{1,\alpha}\bigr)$, where $[\alpha]$ is a generator of the image of $\Phi_\Gamma$, and the Weyl group is cyclic of 
order $3$.

\item  The image of $\Phi_\Gamma$ is isomorphic to $\left(\ZZ/3\right)^2$. In this case the Weyl group $W(\Gamma)$ is trivial, 
and if two generators $[\alpha]$ and $[\beta]$ of the image of 
$\Phi_\Gamma$ are picked, then either 
$\bigl(\cO,\Gamma\bigr)\simeq_{\textup{eq}}
\bigl(\cO_{\alpha,\beta},\Gamma_{\alpha,\beta}\bigr)$, or
$\bigl(\cO,\Gamma\bigr)\simeq_{\textup{eq}}
\bigl(\cO_{\beta,\alpha},\Gamma_{\beta,\alpha}\bigr)$, but not both.
\end{itemize}
\end{theorem}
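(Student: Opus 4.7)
By Corollary~\ref{co:special} I may reduce to $(\cO,\Gamma)=(\cO_{\alpha,\beta},\Gamma_{\alpha,\beta})$ for some $\alpha,\beta\in\FF^\times$. Since every nonzero homogeneous component is one-dimensional, any $\varphi\in\Stab(\Gamma)$ acts diagonally, $\varphi(\tilde z_{i,j})=c_{i,j}\tilde z_{i,j}$. The entries $\tilde z_{1,0}*\tilde z_{1,0}=\tilde z_{2,0}$ and $\tilde z_{2,0}*\tilde z_{2,0}=\alpha\tilde z_{1,0}$ of Figure~\ref{fig:table_ab} yield $c_{2,0}=c_{1,0}^2$ and $c_{1,0}^3=1$; by symmetry $c_{0,1}^3=1$, and the remaining products of the table force $c_{i,j}=c_{1,0}^i c_{0,1}^j$. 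Hence $\Stab(\Gamma)\cong\mu_3(\FF)^2$, which is $(\ZZ/3)^2$ when $\charac\FF\neq 3$ and $\omega\in\FF$ and is trivial otherwise, as claimed.

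Any $\varphi\in\Aut(\Gamma)$ induces an automorphism $\sigma\in\GL(2,3)$ of $(\ZZ/3)^2$, and preservation of the norm forces $\Phi_\Gamma\circ\sigma=\Phi_\Gamma$. Reading the table, $\tilde z_h*\tilde z_{h'}=0$ iff $\det(h,h')\equiv 2\pmod 3$ or $h+h'=0$; combined with $\det(\sigma h,\sigma h')=(\det\sigma)\det(h,h')$ this forces $\det\sigma=1$. Thus $W(\Gamma)$ embeds into $S:=\{\sigma\in\SL(2,3):\Phi_\Gamma\circ\sigma=\Phi_\Gamma\}$. For the reverse inclusion, given $\sigma\in S$, pick $u\in\cO_{\sigma(1,0)}$ and $v\in\cO_{\sigma(0,1)}$ rescaled so that $\norm(u,u*u)=\alpha$ and $\norm(v,v*v)=\beta$; this is possible because $\Phi_\Gamma(\sigma(1,0))=\Phi_\Gamma(1,0)=[\alpha]$ and likewise for $(0,1)$. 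The orthogonality hypothesis of Lemma~\ref{le:technical} is automatic, since the four relevant sums $\sigma(1,0)+\sigma(0,1),\sigma(1,0)+\sigma(0,2),\sigma(2,0)+\sigma(0,1),\sigma(2,0)+\sigma(0,2)$ are all nonzero by injectivity of $\sigma$. Because $\det(\sigma(0,1),\sigma(1,0))=-\det\sigma=2$, the table gives $v*u=0$; Lemma~\ref{le:technical} then produces an isomorphism $\cO_{\alpha,\beta}\to\cO_{\alpha,\beta}$ taking $u\mapsto\tilde z_{1,0}$ and $v\mapsto\tilde z_{0,1}$, whose inverse lies in $\Aut(\Gamma)$ and induces $\sigma$. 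This proves $W(\Gamma)=S$.

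The image of $\Phi_\Gamma$ in $\FF^\times/(\FF^\times)^3$ has order $1$, $3$, or $9$. Order $1$: $\alpha,\beta$ are cubes, rescaling gives $(\cO,\Gamma)\simeq_{\textup{eq}}(\cO_{1,1},\Gamma_{1,1})$, and every $\sigma\in\SL(2,3)$ preserves the trivial $\Phi_\Gamma$, so $W\cong\SL(2,3)$. Order $3$: a $\GL(2,3)$ change of basis puts $\ker\Phi_\Gamma=\langle(1,0)\rangle$, yielding $(\cO_{1,\alpha},\Gamma_{1,\alpha})$; the preservers of $\Phi_\Gamma$ in $\SL(2,3)$ are exactly the upper unitriangular matrices $\left(\begin{smallmatrix}1&k\\0&1\end{smallmatrix}\right)$, $k\in\ZZ/3$, cyclic of order $3$. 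Order $9$: $\Phi_\Gamma$ is injective, so $S=\{1\}$; set $h_\alpha=\Phi_\Gamma^{-1}([\alpha])$, $h_\beta=\Phi_\Gamma^{-1}([\beta])$. Exactly one of the ordered pairs $(h_\alpha,h_\beta)$, $(h_\beta,h_\alpha)$ has determinant $1$ in $\ZZ/3$, so by the obstruction $\det\sigma=1$ exactly one of $(\cO_{\alpha,\beta},\Gamma_{\alpha,\beta})$ and $(\cO_{\beta,\alpha},\Gamma_{\beta,\alpha})$ is equivalent to $(\cO,\Gamma)$, proving the ``either\ldots or\ldots but not both'' assertion.

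For the splitting, the sequence splits automatically whenever $\Stab(\Gamma)$ is trivial (characteristic $3$, or $\omega\notin\FF$) or when $W(\Gamma)$ is trivial or cyclic. The only delicate case is $\cO=\cO_{1,1}$ with $\omega\in\FF$ and $\charac\FF\neq 3$, where by Corollary~\ref{co:autos_not3} $\Aut(\cO)=\PGL(3,\FF)$ and $\Aut(\Gamma_{1,1})=N/Z$, with $N$ the normalizer in $\GL(3,\FF)$ of the Heisenberg group $\langle x,y\rangle$ of~\eqref{eq:Pauli} and $Z$ the scalar subgroup. The main obstacle is to produce an explicit section $\SL(2,3)\hookrightarrow N/Z$: for each $\sigma=\left(\begin{smallmatrix}a&b\\c&d\end{smallmatrix}\right)\in\SL(2,3)$ the relations $x^3=y^3=1$, $yx=\omega xy$ together with $ad-bc=1$ force the existence, unique up to $Z$, of a matrix $M_\sigma\in N$ with $M_\sigma xM_\sigma^{-1}\propto x^a y^c$ and $M_\sigma yM_\sigma^{-1}\propto x^b y^d$, and multiplicativity modulo $Z$ is checked on the standard generators $\left(\begin{smallmatrix}1&1\\0&1\end{smallmatrix}\right)$ and $\left(\begin{smallmatrix}0&-1\\1&0\end{smallmatrix}\right)$ of $\SL(2,3)$ via the classical Weil construction. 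This supplies the required splitting.
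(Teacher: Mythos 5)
The first three quarters of your argument are sound and essentially parallel the paper's: the diagonal computation of $\Stab(\Gamma)\cong\mu_3(\FF)^2$ from the multiplication table, the determinant obstruction forcing the induced map $\sigma$ to lie in $\SL(2,3)$, the identification $W(\Gamma)=\{\sigma\in\SL(2,3):\Phi_\Gamma\circ\sigma=\Phi_\Gamma\}$ via Lemma \ref{le:technical} (this is the paper's \eqref{eq:WG}), and the three-case analysis of the image of $\Phi_\Gamma$, including the determinant argument for the ``but not both'' assertion, all check out; you even supply details the paper leaves implicit.

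The splitting of \eqref{eq:short}, however, is where your proof has genuine gaps, and it is exactly the part that requires work when $\charac\FF\neq 3$ and $\omega\in\FF$. First, your claim that the sequence ``splits automatically\ldots{} when $W(\Gamma)$ is trivial or cyclic'' is false as a general principle: with $\Stab(\Gamma)\cong\left(\ZZ/3\right)^2$ and $W(\Gamma)\cong\ZZ/3$ one has $H^2\bigl(\ZZ/3,(\ZZ/3)^2\bigr)\neq 0$ (already for the trivial action, $\ZZ/9\times\ZZ/3$ is a nonsplit extension of $\ZZ/3$ by $(\ZZ/3)^2$), so the order-$27$ group $\Aut(\Gamma)$ could a priori contain no order-$3$ element mapping onto a generator of $W(\Gamma)$. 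The paper closes this case by exhibiting an explicit lift $\varphi$ with $\varphi(x)=x$, $\varphi(y)=-x*y$ and verifying $\varphi^3=\id$ from \eqref{eq:xyx}; some such computation is unavoidable. Second, in the split case your appeal to the ``classical Weil construction'' is only a sketch: checking multiplicativity of $\sigma\mapsto M_\sigma$ modulo scalars on two generators of $\SL(2,3)$ does not establish a homomorphism (the defining relations must be verified), and the splitting of the normalizer extension $1\to(\ZZ/p)^2\to N/Z\to\SL(2,p)\to 1$ is itself a nontrivial fact, not something you may quote as automatic. Here the paper's route is far simpler and you should adopt it: by \eqref{eq:split} the structure constants of $\cO_{1,1}$ in the basis $\{\tilde z_a\}$ depend only on $\det(a,b)$ and on whether $a+b=0$, both of which are $\SL(2,3)$-invariant, so $f\mapsto\bigl(\tilde z_a\mapsto\tilde z_{f(a)}\bigr)$ is already a group homomorphism $\SL(2,3)\to\Aut(\Gamma_{1,1})$ splitting the sequence.
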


\begin{proof}
The subgroup $H$ generated by the support of $\Gamma$ is isomorphic 
to $\left(\ZZ/3\right)^2$, and the proof of Theorem \ref{th:isotropic} shows that 
$(\cO,\Gamma)\simeq_{\textup{eq}}
(\cO_{\alpha,\beta},\Gamma_{\alpha,\beta})$ for some 
$\alpha,\beta\in \FF^\times$. Then $W(\Gamma)$ can be identified with a subgroup of $\Aut(H)$, and hence by a subgroup of 
$\GL(2,3)=\Aut\left(\bigl(\ZZ/3\bigr)^2\right)$. Equation \eqref{eq:split} and Lemma \ref{le:technical} shows that 
$W(\Gamma)$ is, under the identification above, precisely
\begin{equation}\label{eq:WG}
W(\Gamma)=\{f\in\SL(2,3)\mid \Phi_\Gamma(f.a)=\Phi_\Gamma(a)\ 
\forall a\in\left(\ZZ/3\right)^2\}.
\end{equation}

If the image of $\Phi_\Gamma$ is trivial, and $a,b$ generate $H$, there are elements $x\in\cO_a$, $y\in\cO_b$ with 
$\norm(x,x*x)=1=\norm(y,y*y)$ and either $x*y=0$ or $y*x=0$, 
but not both. If $x*y\neq 0$, 
Lemma \ref{le:technical} shows that there is an isomorphism 
$\cO\rightarrow\cO_{1,1}$ mapping $x$ to $\tilde z_{1,0}$ and $y$ to $\tilde z_{0,1}$, while if $x*y=0$, then \eqref{eq:xyx} shows that 
$(y*y)*x=-(x*y)*y=0$. Lemma \ref{le:technical} gives 
$x*(y*y)\neq 0$.  Replacing $y$ by $y*y$, which is homogeneous too, 
 we get that there is an isomorphism 
$\cO\rightarrow\cO_{1,1}$ mapping $x$ to $\tilde z_{1,0}$ and 
$y*y$ to $\tilde z_{0,1}$. This shows $(\cO,\Gamma)\simeq_{\textup{eq}}\bigl(\cO_{1,1},\Gamma_{1,1}\bigr)$.

Moreover, $W(\Gamma)$ is here the whole $\SL(2,3)$, and \eqref{eq:split} shows that for any $f\in\SL(2,3)$, the linear map that takes $\tilde z_a$ to $\tilde z_{f(a)}$ for any 
$a\in \left(\ZZ/3\right)^2$ is an automorphism. This shows that \eqref{eq:short} splits.

\smallskip

If the image of $\Phi_\Gamma$ is cyclic of order $3$, take
 $a\in\ker\Phi_\Gamma$ and $b\in H\setminus\ker\Phi_\Gamma$ 
with $\Phi_\Gamma(b)=[\alpha]$. By Lemma \ref{le:technical} either 
$\cO_a*\cO_b\neq 0$, or $\cO_b*\cO_a\neq 0$. In the latter case,
as above, we have
$\cO_{a^2}*\cO_b\neq 0$. Thus, changing $a$ by $a^2$ if necessary,
we may assume $\cO_a*\cO_b\neq 0$. Lemma \ref{le:technical} shows 
$(\cO,\Gamma)\simeq_{\textup{eq}}
\bigl(\cO_{1,\alpha},\Gamma_{1,\alpha}\bigr)$. Identify $H$ with 
$\left(\ZZ/3\right)^2$ by means of $a\mapsto (1,0)$ and $b\mapsto (0,1)$. Because of
\eqref{eq:WG}, any $f\in W(\Gamma)$ satisfies $f.a=a$ or $f.a=a^2$,
and $f.b$ is either $b$, $ab$, or $a^2b$.  In other words, the coordinate matrix of $f$ is $\left(\begin{smallmatrix} \pm 1&i\\ 
0&1\end{smallmatrix}\right)$ for some $i\in\ZZ/3$. But $f$ is in 
$\SL(2,3)$, so the first entry must be $1$. Hence $W(\Gamma)$ is cyclic of order $3$. Take $x\in\cO_a$ and $y\in\cO_b$ with 
$\norm(x,x*x)=1$ and $\norm(y,y*y)=\alpha$. As 
$\norm\bigl(x*y,(x*y)*(x*y)\bigr)=-\alpha$ (see \eqref{eq:nyx}),
Lemma \ref{le:technical} shows the existence of an automorphism 
$\varphi$ of $\cO$ with $\varphi(x)=x$ and $\varphi(y)=-x*y$. Using repeatedly \eqref{eq:xyx}, we get
\[
\varphi^3(y)=-\Bigl(x*\bigl(x*(x*y)\bigr)\Bigr)
=(x*y)*(x*x)=\norm(x,x*x)y=y,
\]
so we obtain $\varphi^3=\id$. This shows that \eqref{eq:short}
splits in this case, as $\varphi\in\Aut(\Gamma)$.

\smallskip

Finally, If the image of $\Phi_\Gamma$ is isomorphic to $\left(\ZZ/3\right)^2$, $\Phi_\Gamma$ is one-to-one, and $W(\Gamma)$ is 
trivial because of \eqref{eq:WG}. Let $a,b$ be generators of $H$ with 
$\Phi_\Gamma(a)=[\alpha]$ and $\Phi_\Gamma(b)=[\beta]$, and take
$x\in\cO_a$ with $\norm(x,x*x)=\alpha$ and $y\in\cO_b$ with
$\norm(y,y*y)=\beta$. Lemma \ref{le:technical} shows that if
$\cO_a*\cO_b\neq 0$ (i.e., $x*y\neq 0$), then 
$(\cO,\Gamma)\simeq_{\textup{eq}}
\bigl(\cO_{\alpha,\beta},\Gamma_{\alpha,\beta}\bigr)$ by means of an
isomorphism that takes $x$ to $\tilde z_{1,0}$ and $y$ to 
$\tilde z_{0,1}$, while if $\cO_a*\cO_b=0$, then 
$\cO_b*\cO_a\neq 0$ and then 
$(\cO,\Gamma)\simeq_{\textup{eq}}
\bigl(\cO_{\beta,\alpha},\Gamma_{\beta,\alpha}\bigr)$.
\end{proof}

If $\FF$ is a field with $\charac\FF\neq 3$, 
$\omega$ will denote a primitive cubic root of $1$ in an 
algebraic closure of $\FF$.

As a consequence of Theorem \ref{th:special}, $\Aut(\Gamma)$ is $W(\Gamma)$ if 
$\charac\FF=3$ or 
$\charac\FF\neq 3$ and $\omega\not\in\FF$. Otherwise $\Aut(\Gamma)$ is the
semidirect product $\left(\ZZ/3\right)^2\rtimes W(\Gamma)$, where
$W(\Gamma)$ is identified with a subgroup of $\Aut(\Gamma)$ as shown in
the proof above.

In particular, for the canonical grading $\Gamma_{1,1}$ on the split Okubo algebra
$\cO_{1,1}$ over a field $\FF$ with $\charac\FF\neq 3$ and $\omega\in \FF$,
we obtain:
\begin{equation}\label{eq:AutG11}
\Aut(\Gamma_{1,1})=\left(\ZZ/3\right)^2\rtimes \SL(2,3),
\end{equation}
where the action of $\SL(2,3)$ on $\left(\ZZ/3\right)^2$ is the natural one.

Moreover, this is independent of the field $\FF$. The smallest such field 
is the field of four elements $\FF_4=\{0,1,\omega,\omega^2=1+\omega\}$.
The (unique up to isomorphism by Theorem \ref{th:finite}) 
Okubo algebra $\cO=\cO_{1,1}$ over $\FF_4$ is
the algebra $\bigl(\frsl(3,\FF_4),*,\norm\bigr)$ as in 
Theorem \ref{th:classification_not3}, whose group of automorphisms is $\PGL(3,4)$
(the group of automorphisms of $\Mat_3(\FF_4)$). In other words, the automorphisms
of $\cO$ are obtained by conjugation by elements in $\GL(3,4)$.

Let $\tau$ denote the nontrivial
automorphism of $\FF_4$: $\tau(\alpha)=\alpha^2$ for all 
$\alpha\in\FF_4$. Its
fixed subfield is $\FF_2=\{0,1\}$.
Let $V$ be  the three-dimensional vector space $\FF_4^3$ over $\FF_4$, and endow $V$ with the hermitian form $h:V\times V\rightarrow \FF_4$ where the canonical basis is orthonormal: $h(u_i,u_j)=\delta_{ij}$. (Note that we have 
$h(v,u)=\tau\bigl(h(v,u)\bigr)$ for all $u,v\in V$.) The corresponding group of isometries
is the unitary group $\U(3,2^2)$, which is the subgroup of $\GL(3,4)$ consisting
of those matrices $a$ with $a^\dagger a=1$, where $\dagger$ denotes the 
second kind involution on $\Mat_3(\FF_4)$ determined by $h$: 
$h(a.u,v)=h(u,a^\dagger .v)$ for all $a\in\Mat_3(\FF_4)$ and $u,v\in V$.

Pauli matrices $x,y$ in \eqref{eq:Pauli_xy} are unitary 
($x^\dagger x=1=y^\dagger y$) and their minimal polynomial is $X^3-1$. 
It follows that 
the matrices $z_{ij}$ as in \eqref{eq:zij} are all unitary too and, for $(i,j)\neq (0,0)$, their minimal polynomial is again $X^3-1$. (Note that
in $\FF_4$, $\omega-\omega^2=1$, so $z_{i,j}=\omega^{-ij}x^iy^j$.)

The set of (nonzero) homogeneous elements of $\Gamma_{1,1}$ is 
\[
\cH=\bigcup_{(0,0)\neq (i,j)\in\left(\ZZ/3\right)^2}\FF_4^\times z_{i,j}.
\]

\begin{lemma}\label{le:orbit_x}
The orbit of $x$ under the action by conjugation of $\U(3,2^2)$ is precisely $\cH$.
\end{lemma}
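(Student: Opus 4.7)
The plan is to establish the containment $\cH \subseteq \orbit(x)$ by an explicit diagonalization, then deduce equality by comparing the size of the orbit with $|\cH|$.

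First, I would check that every $a = \lambda z_{i,j}$, with $\lambda \in \FF_4^\times$ and $(i,j) \neq (0,0)$, is unitary with minimal polynomial $X^3 - 1$. Unitarity is immediate from the unitarity of $z_{i,j}$ together with $\tau(\lambda)\lambda = \lambda^3 = 1$. Next, $(\lambda z_{i,j})^3 = \lambda^3 z_{i,j}^3 = 1$, so the minimal polynomial divides $(X-1)(X-\omega)(X-\omega^2)$; and since $\trace(\lambda z_{i,j}) = 0$ (because for $(i,j) \neq (0,0)$ the matrix $x^i y^j$ has trace zero), the only multiset of three cubic roots of unity summing to zero in characteristic $2$ is $\{1, \omega, \omega^2\}$, so the three eigenvalues of $a$ are simple and the minimal polynomial is $X^3 - 1$.

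Second, for such an $a$, I would build a unitary $P$ with $P^{-1} a P = x$. Picking eigenvectors $v_1, v_\omega, v_{\omega^2}$ of $a$, for $\alpha \neq \beta$ the identity
\[
\alpha\, h(v_\alpha, v_\beta) = h(a v_\alpha, v_\beta) = h(v_\alpha, a^\dagger v_\beta) = h(v_\alpha, a^{-1} v_\beta) = \tau(\beta^{-1})\, h(v_\alpha, v_\beta) = \beta\, h(v_\alpha, v_\beta),
\]
in which the crucial step $\tau(\beta^{-1}) = \beta^{-2} = \beta$ uses $\tau(\beta) = \beta^2$ and $\beta^3 = 1$ in $\FF_4^\times$, forces $h(v_\alpha, v_\beta) = 0$. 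Since $h$ is nondegenerate and $V = \bigoplus_\alpha \FF_4 v_\alpha$ is an orthogonal decomposition, each $h(v_\alpha, v_\alpha)$ is nonzero; as it lies in the fixed subfield of $\tau$, namely $\FF_2$, it must equal $1$. Hence $\{v_1, v_\omega, v_{\omega^2}\}$ is $h$-orthonormal, and the matrix $P$ with these columns is unitary and satisfies $P^{-1} a P = \diag(1, \omega, \omega^2) = x$. This proves $\cH \subseteq \orbit(x)$.

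To close, I would count. The stabilizer of $x$ in $\U(3, 2^2)$ consists of the matrices commuting with $x$; since $x$ has distinct diagonal eigenvalues, the stabilizer lies in the diagonal subgroup, and every diagonal matrix with entries in $\FF_4^\times$ is automatically unitary (each entry cubes to $1$), so the stabilizer has order $27$. Since $|\U(3, 2^2)| = 2^3(2+1)(4-1)(8+1) = 648$, the orbit has size $648/27 = 24 = |\cH|$, and the containment established above becomes an equality.

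The main obstacle is the orthogonality computation in the second step, whose success rests on the identity $\tau(\beta^{-1}) = \beta$ for $\beta \in \FF_4^\times$. This is a fortuitous feature of characteristic $2$ combined with $|\FF_4^\times| = 3$, making the spectral theorem for unitary operators essentially formal in this setting.
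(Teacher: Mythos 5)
Your proof is correct and follows the same strategy as the paper: show $\cH\subseteq\orbit(x)$ using that each $\lambda z_{i,j}$ is unitary with minimal polynomial $X^3-1$, then conclude equality by computing the orbit size $648/27=24=|\cH|$. The only difference is that you spell out, via the orthogonality of eigenspaces and the identity $\tau(\beta^{-1})=\beta$ on $\FF_4^\times$, the unitary diagonalization that the paper dismisses as ``clear''.
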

\begin{proof}
Since the $z_{i,j}$'s above are unitary and with the same minimal 
polynomial than $x$, it is clear that $\cH$ is contained in the orbit of $x$.

The stabilizer of $x$ consists of the diagonal matrices in $\U(3,2^2)$ ($27$ elements), while the size of $\U(3,2^2)$ is $36\times 6\times 3=648$. Hence the size of the orbit of $x$ is $648/27=24$, which coincides with the size of $\cH$.
\end{proof}

\begin{theorem}\label{th:PU32}
Let $\cO=\cO_{1,1}$ be the split Okubo algebra over a field $\FF$ of characteristic
not $3$ and containing a primitive cubic root of $1$. Then 
$\Aut(\Gamma_{1,1})$ is
isomorphic to the projective unitary group $\PU(3,2^2)$.
\end{theorem}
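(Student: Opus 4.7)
The plan is to exploit the independence of $\Aut(\Gamma_{1,1})$ from $\FF$, noted in the paragraph immediately preceding the theorem, to reduce the problem to the case $\FF=\FF_4$. Over $\FF_4$, by Theorem~\ref{th:classification_not3} and Corollary~\ref{co:autos_not3}, the algebra $\cO_{1,1}$ is identified with $(\frsl(3,\FF_4),*,\norm)$ and $\Aut(\cO_{1,1})\cong\PGL(3,4)$, with automorphisms acting by conjugation of matrices. First I compute the two orders and match them: from $|\U(3,2^2)|=2^{3}(2+1)(2^{2}-1)(2^{3}+1)=648$ and $|Z(\U(3,2^2))|=|\mu_3(\FF_4)|=3$ (every nonzero element of $\FF_4$ has order dividing $3$), I get $|\PU(3,2^2)|=216$; on the other hand, \eqref{eq:AutG11} gives $|\Aut(\Gamma_{1,1})|=|(\ZZ/3)^2\rtimes\SL(2,3)|=9\cdot 24=216$. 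Hence it suffices to exhibit an injective homomorphism $\PU(3,2^2)\hookrightarrow\Aut(\Gamma_{1,1})$.

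Next I construct the natural homomorphism $\varphi\colon\U(3,2^2)\to\Aut(\cO_{1,1})$ sending a unitary $a$ to conjugation by $a$, which indeed preserves $\frsl(3,\FF_4)$ and the Okubo product \eqref{eq:*omega} (since the latter is built from the associative product and the trace). The decisive point is that the image of $\varphi$ is contained in $\Aut(\Gamma_{1,1})$. For this I use Lemma~\ref{le:orbit_x}: the set $\cH=\bigcup_{(i,j)\neq(0,0)}\FF_4^\times z_{i,j}$ of nonzero homogeneous elements of $\Gamma_{1,1}$ is the $\U(3,2^2)$-conjugation orbit of $x$, and is therefore stable under conjugation by every $a\in\U(3,2^2)$. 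Since each $z_{i,j}$ lies in $\cH$, the automorphism $\varphi(a)$ sends each homogeneous line $\FF_4 z_{i,j}$ onto another homogeneous line, so it permutes the homogeneous components of $\Gamma_{1,1}$ and belongs to $\Aut(\Gamma_{1,1})$.

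Finally, the kernel of $\varphi$ is the intersection of $\U(3,2^2)$ with the scalar matrices in $\GL(3,4)$, i.e.\ the group of $\lambda I$ with $\lambda\bar\lambda=\lambda^{3}=1$, which is $\mu_3(\FF_4)\cdot I=Z(\U(3,2^2))$, of order $3$. Consequently $\varphi$ descends to an injection $\bar\varphi\colon\PU(3,2^2)\hookrightarrow\Aut(\Gamma_{1,1})$, and since both sides have order $216$, $\bar\varphi$ is an isomorphism. The only real obstacle is verifying that conjugation by arbitrary unitary matrices preserves the grading, which is precisely where Lemma~\ref{le:orbit_x} is essential; once that is established, the argument reduces to the order comparison above.
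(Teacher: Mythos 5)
Your proposal is correct and follows essentially the same route as the paper: both reduce to $\FF_4$ via the field-independence of \eqref{eq:AutG11}, use Lemma~\ref{le:orbit_x} to see that conjugation by unitary matrices permutes the homogeneous components (so that $\PU(3,2^2)$ embeds in $\Aut(\Gamma_{1,1})$), and conclude by comparing the two orders, $648/3=216=9\cdot 24$. Your write-up merely makes explicit some details the paper leaves implicit (the kernel of the conjugation map and why the Okubo product is preserved).
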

\begin{proof}
By Theorem \ref{th:special}, $\Aut(\Gamma_{1,1})$ is the semidirect product in
\eqref{eq:AutG11}, and this is independent of $\FF$ (as long as 
$\charac\FF\neq 3$ and $\omega\in\FF$). 
In particular, as the size of $\SL(2,3)$ is 
$\frac{8\times 6}{2}=24$, $\Aut(\Gamma_{1,1})$ has 
$9\times 24=216$ elements.

Lemma \ref{le:orbit_x} shows that $\PU(3,2^2)$ permutes the homogeneous components
of the grading $\Gamma_{1,1}$ over $\FF_4$, and hence $\PU(3,2^2)$ is contained
in $\Aut(\Gamma_{1,1})$. But $\PU(3,2^2)$ has $648/3=216$ elements, and the result follows.
\end{proof}

Theorem \ref{th:PU32} gives a surprising proof of the next result.

\begin{corollary}\label{co:PU32}
The projective unitary group $\PU(3,2^2)$ is, up to isomorphism, the semidirect 
product $\left(\ZZ/3\right)^2\rtimes \SL(2,3)$, and the projective special unitary group $\PSU(3,2^2)$ is the semidirect product of
$\left(\ZZ/3\right)^2$ and the quaternion group $Q_8$ (the Sylow
$2$-subgroup of $\SL(2,3)$).
\end{corollary}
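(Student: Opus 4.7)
The first statement is immediate from Theorem \ref{th:PU32} combined with \eqref{eq:AutG11}.

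For the second statement, the plan is to identify $\PSU(3,2^2)$ as a normal subgroup of index $3$ inside $\PU(3,2^2)=(\ZZ/3)^2\rtimes\SL(2,3)$. Working over $\FF_4$, I would first verify that both Pauli matrices $x$ and $y$ from \eqref{eq:Pauli_xy} lie in $\SU(3,2^2)$: $\det(x)=1\cdot\omega\cdot\omega^2=1$, and $\det(y)=+1$ since $y$ is a cyclic permutation matrix. The Pauli relations \eqref{eq:Pauli} give $x(x^iy^j)x^{-1}=\omega^{-j}x^iy^j$ and $y(x^iy^j)y^{-1}=\omega^ix^iy^j$, so conjugation by $x$ and by $y$ preserves every homogeneous component of $\Gamma_{1,1}$, and therefore lies in $\Stab(\Gamma_{1,1})$. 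Since $x$ and $y$ have order $3$ and commute modulo scalars, their images in $\PU(3,2^2)$ generate a subgroup of order $9$, which must equal the whole $\Stab(\Gamma_{1,1})=(\ZZ/3)^2$ by Theorem \ref{th:special}. Consequently $(\ZZ/3)^2\subset\PSU(3,2^2)$.

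Invoking the standard value $|\PSU(3,2^2)|=72$, the quotient $\PSU(3,2^2)/(\ZZ/3)^2$ has order $8$ and embeds into $\SL(2,3)$ via the projection $\PU(3,2^2)\to\SL(2,3)$. The unique subgroup of $\SL(2,3)$ of order $8$ is its (normal) Sylow $2$-subgroup, the quaternion group $Q_8$, so $\PSU(3,2^2)$ is precisely the preimage of $Q_8$ under this projection. The splitting of \eqref{eq:short} provided by Theorem \ref{th:special} realizes $\SL(2,3)$, and therefore also $Q_8$, as a literal subgroup of $\PU(3,2^2)$; this copy of $Q_8$ then lies inside $\PSU(3,2^2)$ and complements $(\ZZ/3)^2$, yielding $\PSU(3,2^2)\cong(\ZZ/3)^2\rtimes Q_8$ with the action inherited from that of $\SL(2,3)$ on $(\ZZ/3)^2$.

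The main obstacle is the bridge between the abstract description $\Stab(\Gamma_{1,1})=(\ZZ/3)^2$ coming from the grading and its concrete realization via conjugation by the Pauli matrices inside $\PU(3,2^2)$; once that identification is secured, the determinant computation and the order-counting argument take care of the rest.
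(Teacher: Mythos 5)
Your proof is correct, but its second half follows a genuinely different route from the paper's. The paper argues abstractly: $\PSU(3,2^2)$ is normal of index $3$ in $\PU(3,2^2)$, hence contains the derived subgroup of $\left(\ZZ/3\right)^2\rtimes\SL(2,3)$, which already contains the factor $\left(\ZZ/3\right)^2$ (the natural action of $\SL(2,3)$ has no nonzero fixed vectors); from there $\PSU(3,2^2)$ must be $\left(\ZZ/3\right)^2\rtimes Q$ for the unique order-$8$ normal subgroup $Q$ of $\SL(2,3)$, and the paper exhibits two anticommuting order-$4$ matrices to verify $Q\cong Q_8$. You instead establish the key containment $\left(\ZZ/3\right)^2\subset\PSU(3,2^2)$ concretely, by checking that the Pauli matrices $x,y$ lie in $\SU(3,2^2)$ and that conjugation by them multiplies each $z_{i,j}$ by a cube root of unity, so that their images (which are independent modulo scalars) fill out the order-$9$ group $\Stab(\Gamma_{1,1})$; you then finish by counting, using the standard value $|\PSU(3,2^2)|=72$. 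Your version makes fully explicit the identification of the abstract $\Stab(\Gamma_{1,1})$ with a concrete subgroup of $\PU(3,2^2)$ — a point the paper leaves implicit — at the cost of importing the order of $\PSU(3,2^2)$ and of quoting, rather than verifying, that the Sylow $2$-subgroup of $\SL(2,3)$ is the quaternion group (the one spot where the paper is more self-contained). Both arguments coincide in the final step: the order-$8$ quotient must be the unique normal Sylow $2$-subgroup of $\SL(2,3)$, and the splitting of \eqref{eq:short} from Theorem \ref{th:special} restricts to give the semidirect product decomposition of $\PSU(3,2^2)$.
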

\begin{proof}
The first part follows at once from Theorems \ref{th:PU32} and 
\ref{th:special}.

The classical group $\PSU(3,2^2)$ is a normal subgroup of index $3$ in 
$\PU(3,2^2)$,
and hence it contains its derived subgroup. It follows that, under
the isomorphism $\PU(3,2^2)\cong \left(\ZZ/3\right)^2\rtimes\SL(2,3)$, 
$\PSU(3,2^2)$ is the semidirect product $\left(\ZZ/3\right)^2\rtimes Q$, for an
order $8$ normal subgroup of $\SL(2,3)$. Then $Q$ is necessarily the unique 
(normal) Sylow $2$-subgroup of $\SL(2,3)$. The matrices
$\left(\begin{smallmatrix} 0&-1\\ 1&0\end{smallmatrix}\right)$ and
$\left(\begin{smallmatrix} 1&1\\ 1&-1\end{smallmatrix}\right)$ are
order $4$ elements of $\SL(2,3)$ that anticommute. Hence $Q$ is
generated by these two elements and it is isomorphic to the quaternion
group $Q_8$. The result follows.
\end{proof}

According to Corollary \ref{co:autos_not3}, the group $\PU(3,2^2)$ is (isomorphic to) the group of automorphisms of the Okubo algebra $\cO$ over the field of two elements. The group $\Aut(\cO)$ was shown in 
\cite[Propositions 7.10 and 7.13]{SV} to be naturally isomorphic to the group of automorphisms of the Okubo quasigroup $\mathsf{OK}(2)$ and its structure was determined, using \texttt{GAP}, to be an extension of $\SL(2,3)$ by 
$\left(\ZZ/3\right)^2$. By Corollary \ref{co:autos_not3}, $\Aut(\cO)$ is, up to isomorphism $\PU(3,2^2)$ and Corollary \ref{co:PU32} explains its structure.


\bigskip

\end{document}